\tikzstyle{myarrow}=[->, >=open triangle 90, thick]
\tikzstyle{line} = [draw, -latex']
 \tikzstyle{Vertex} = [draw, circle, minimum width = .25cm, text width=.25cm,  text centered]
\tikzstyle{box} = [draw, rectangle, minimum width = 1.5cm, text width=.25cm,  text centered]
\newtheorem{theorem}{Theorem}[section]
\newtheorem{lemma}[theorem]{Lemma}
\newtheorem{corollary}[theorem]{Corollary}
\newtheorem{proposition}[theorem]{Proposition}
\theoremstyle{definition}
\newtheorem{definition}[theorem]{Definition}
\newtheorem{example}[theorem]{Example}
\theoremstyle{remark}
\newtheorem{remark}[theorem]{Remark}
\numberwithin{equation}{section}
\begin{document}
	
	\title[A Radon-Nikod\'ym theorem for CP-maps on Hilbert pro-$C^*$-modules]{A Radon-Nikod\'ym theorem for completely positive maps on Hilbert pro-$C^*$-modules}


 \author{Bhumi Amin}
	\address{Department of Mathematics, \\IIT Hyderabad, \\Telangana, India - 502285 }
\email{ma20resch11008@iith.ac.in}

	\author{Ramesh Golla}
	\address{Department of Mathematics,\\ IIT Hyderabad, \\Telangana, India - 502285}

	\email{rameshg@math.iith.ac.in}

	
    \subjclass{Primary 46L05,\;46L08;\; Secondary: 46K10}
	
	
	
		\keywords{completely positive maps, pro-$C^*$-algebra, Hilbert modules, Stinespring's dilation}
	
		\begin{abstract}
			We introduce an equivalence relation on the set of all completely positive maps between Hilbert modules over pro-$C^*$-algebras and analyze the Stinespring's construction for equivalent completely positive maps. We then give a pre-order relation in the collection of all completely positive maps between Hilbert modules over pro-$C^*$-algebras and obtain a Radon-Nikod\'ym type theorem.
		\end{abstract}
	
	\maketitle
	
	

\section{Introduction}
The study of completely positive maps (CP-maps) is driven by their significant applications in modern mathematics, including quantum information theory, statistical physics, and stochastic processes. In particular, operator-valued CP-maps on $C^*$-algebras are used to model quantum operations and quantum probabilities (see \cite{VP} for more details on CP-maps).
Stinespring \cite[Theorem 1]{WS} established that any operator-valued completely positive map \(\phi\) on a unital \(C^*\)-algebra \(\mathcal{A}\) can be represented in the form  
$
{V_\phi}^* \pi_\phi(\cdot) V_\phi,
$ 
where \(\pi_\phi\) is a representation of \(\mathcal{A}\) on a Hilbert space \(H\), and \(V_\phi\) is a bounded linear operator.

The Radon-Nikod\'ym theorem, a fundamental result in measure theory, expresses the relationship between two measures defined on the same measurable space. 
The theorem was subsequently generalized to $W^*$-algebras, von-Neumann algebras, and $^*$-algebras, in that order (see references \cite{SS, PT, SG}). In 1983, Atsushi Inoue introduced a Radon-Nikod\'ym theorem for positive linear functionals on $^*$-algebras in \cite{AI2}. Additionally, a Radon-Nikod\'ym theorem for completely positive maps was developed by Belavkin and Staszewski in 1986 (see \cite{BS} for more details).

Given two operator valued completely positive maps $\phi$ and $\psi$ on a $C^*$-algebra $\mathcal{A}$, a natural partial order is defined by $\phi \leq \psi$ if $\psi - \phi$ is completely positive. Arveson, in \cite{AW}, characterized this relation using the Stinespring construction associated with each completely positive map and introduced the notion of the Radon-Nikod\'ym derivative for operator-valued completely positive maps on $C^*$-algebras. He proved that $\phi \leq \psi$ if and only if there exists a unique positive contraction $\Delta_\phi(\psi)$ in the commutant of $\pi_\phi(\mathcal{A})$ such that $\psi(.) = {V_\phi}^*\Delta_\phi(\psi)\pi_\phi(.)V_\phi$.

Hilbert modules over \(C^*\)-algebras serve as a natural generalization of Hilbert spaces, where the inner product takes values in a \(C^*\)-algebra rather than the complex numbers.
Kaplansky first introduced Hilbert modules over unital, commutative \(C^*\)-algebras in \cite{KI}. Paschke in 1973, and Rieffel in 1974, later extended the study to Hilbert modules over arbitrary \(C^*\)-algebras (refer to \cite{Pash2,RM}).  
A Stinespring-type representation for operator-valued completely positive maps on Hilbert modules over \(C^*\)-algebras is given by Asadi in \cite{AMB}.
 A refinement of this result was given by Bhat, Ramesh, and Sumesh in \cite{BRS}. Building on \cite[Theorem 2.1]{BRS}, Skeide developed a factorization theorem in \cite{MS2} using induced representations of Hilbert modules over $C^*$-algebras. In \cite{BR}, a Stinespring-like theorem for maps between two Hilbert modules over respective pro-$C^*$-algebras is established. 
We primarily utilized this result, along with additional definitions from \cite{BR}, to prove our results.

The concept of locally $C^*$-algebras, which is a generalization of $C^*$-algebras, was introduced by A. Inoue in 1971 (see \cite{AI} for more details). A locally \(C^*\)-algebra is a complete topological involutive algebra, where the topology is determined by a family of seminorms. These algebras are also referred to as ``pro-\(C^*\)-algebras," a term that will be used throughout this paper.
Rieffel defined induced representations of $C^*$-algebras (see \cite{RM}). Induced representations of Hilbert modules over pro-\( C^* \)-algebras have been studied in depth in \cite{MJ2, KS1}.
 In 1988, Phillips \cite{NCP} characterized a topological $^*-$algebra $\mathcal{A}$ as a pro-$C^*$-algebra if it is the inverse limit of an inverse system of $C^*$-algebras and $^*-$homomorphisms. Using this setup, Hilbert modules over a pro-$C^*$-algebra can be defined, which we refer to as Hilbert pro-$C^*$-modules.

Joiţa \cite{MJ}, in 2012, established a preorder relation for operator-valued completely positive maps on a Hilbert module over $C^*$-algebras and established a Radon-Nikod\'ym-type theorem for these maps.
 In 2017, Karimi and Sharifi \cite{KS} presented a Radon-Nikod\'ym theorem for operator valued completely positive maps on Hilbert modules over pro-$C^*$-algebras. These contributions form the primary motivation for our research. In this paper, we establish an equivalence relation on the set of all completely positive maps between two Hilbert pro-$C^*$-modules, demonstrating that the Stinespring constructions for equivalent completely positive maps are equivalent in some sense. Additionally, we introduce a preorder relation for completely positive maps between two Hilbert pro-$C^*$-modules and prove a Radon-Nikod\'ym-type theorem for these maps.

\section{Preliminaries}

Throughout this paper, we focus on algebras over the complex field. First, let's review the definitions of pro-$C^*$-algebras and Hilbert modules over these algebras.

\begin{definition}\cite[Definition 2.1]{AI}
    A $^*-$algebra $\mathcal{A}$ is called a pro-$C^*$-algebra if there exists a family $\{p_j\}_{j\in J}$ of semi-norms defined on $\mathcal{A}$ such that the following conditions are satisfied: 
    \begin{enumerate}
        \item $\{p_j\}_{j\in J}$ defines a complete Hausdorff locally convex topology on $\mathcal{A}$;
        \item $p_j(x y) \leq p_j(x) p_j(y),$ for all $x, y \in \mathcal{A}$ and each $j \in J$;
        \item $p_j(x^*) = p_j(x),$ for all $x \in \mathcal{A}$  and each $j \in J$;
        \item $p_j(x^*x) = p_j(x)^2,$  for all $x \in \mathcal{A}$  and each $j \in J$.
    \end{enumerate}
\end{definition}
We call the family $\{p_j\}_{j\in J}$ of semi-norms defined on $\mathcal{A}$ as the family of $C^*$-semi-norms. Let $S(\mathcal{A})$ denote the set of all continuous $C^*$-semi-norms on $\mathcal{A}.$ If the pro-$C^*$-algebra $\mathcal{A}$ is unital, we denote its unit by $1_\mathcal{A}$. The following are few examples of a pro-$C^*$-algebra:

\begin{enumerate}
    \item Consider the set $\mathcal{A} = C(\mathbb{R}),$ the set of all continuous complex valued functions on $\mathbb{R}.$  Then $\mathcal{A}$ forms a pro-$C^*$-algebra, with the locally convex  Hausdorff topology induced by the family $\{p_n\}_{n \in \mathbb{N}}$ of seminorms given by, 
$$p_n(f) = \text{sup}\{|f(t)|: t \in [-n,n]\}.$$

     \item        A product of $C^*$-algebras with product topology is a pro-$C^*$-algebra. Indeed, for a collection $\{A_j\}_{j \in J}$ of $C^*$-algebras, define
$$p_j((a_j)_j) : = \|a_j\|_j,$$
where $\|\cdot\|_j$ denotes the $C^*$-norm on $A_j.$

\end{enumerate}


Consider two pro-$C^*$-algebras $\mathcal{A}$ and $\mathcal{B}$. 

An element $a \in \mathcal{A}$ is called positive (denoted by $a \geq 0$), if there is an element $b \in \mathcal{A}$ such that $a = b^*b$.
A linear map $\phi: \mathcal{A} \rightarrow \mathcal{B}$ is said to be positive if  $\phi(a^*a)\geq 0,$ for all $a \in \mathcal{A}$. By $M_n(\mathcal{A})$ we denote the set all of $n \times n$ matrices with entries from $\mathcal{A}.$ Note that $M_n(\mathcal{A})$ is a pro-$C^*$-algebra (see \cite{MJbook} for futher details).

	\begin{definition}
		 A linear map $\phi: \mathcal{A} \rightarrow \mathcal{B}$ is said to be completely positive\index{completely positive} (or CP), if for all $n \in \mathbb{N}, \phi^{(n)}: M_n(\mathcal{A}) \rightarrow M_n(\mathcal{B})$ defined by $$\phi^{(n)}([a_{ij}]_{i,j=1}^n) = [\phi(a_{ij})]_{i,j=1}^n$$ is positive.
	\end{definition}

Note that, for completely positive maps $\phi_1, \phi_2: \mathcal{A} \rightarrow \mathcal{B}$, we write $\phi_1 \leq \phi_2$, whenever $\phi_2 - \phi_1$ is completely positive.

	\begin{definition}\cite[Definition 1.1.6]{MJbook}
A $^*-$morphism $\phi: \mathcal{A} \rightarrow \mathcal{B}$ is a linear map such that the following two conditions are satisfied:
		\begin{itemize}
			\item[(1)] $\phi(ab) = \phi(a)\phi(b),$ for all $a, b \in \mathcal{A}$; 
			\item[(2)] $\phi(a^*) = \phi(a)^*,$ for all $a \in \mathcal{A}.$
		\end{itemize}
	\end{definition}

\begin{definition}
    Let 
    $E$ a complex vector space that is also a right $\mathcal{A}$-module. We call $E$ to be a pre-Hilbert $\mathcal{A}$-module if it has an $\mathcal{A}$-valued inner product $\langle \cdot, \cdot \rangle: E \times E \rightarrow \mathcal{A}$, which is $\mathbb{C}$-linear and $\mathcal{A}$-linear in the second variable, and meets the following conditions:
\begin{enumerate}
    \item $\langle \xi, \eta \rangle^* = \langle \eta, \xi \rangle$ for all $\xi, \eta \in E$;
    \item $\langle \xi, \xi \rangle \geq 0$ for all $\xi \in E$;
    \item $\langle \xi, \xi \rangle = 0$ if and only if $\xi = 0$.
\end{enumerate}

We say that $E$ is a Hilbert $\mathcal{A}$-module if it is complete with respect to the topology defined by the $C^*-$seminorms $\{\| \cdot \|_p\}_{p \in S(\mathcal{A})}$, 
$$\|\xi\|_p := \sqrt{p(\langle \xi, \xi \rangle)}, \ \text{  for $\xi \in E$.}$$

\end{definition}
When working with multiple Hilbert modules over the same pro-$C^*$-algebra, we use the notation $\| \cdot \|_{p_E}$ instead of $\| \cdot \|_p$.

\begin{definition}
A closed submodule $E_0$ of a Hilbert $\mathcal{A}$-module $E$ is said to be complemented if 
\[
E = E_0 \oplus E_0^\perp,
\]
where $
E_0^\perp = \{x \in E : \langle x, y \rangle = 0 \text{ for every } y \in E_0\}.
$
\end{definition}
We use $E \ominus E_0$ to denote the orthogonal complement ${E_0}^\perp$ of $E_0$ in $E$.

For our results, we'll employ a modified version of the following well-known definition, referencing it as needed as seen in \cite{BR}.
 	\begin{definition}
        Let $E$ be a Hilbert $\mathcal{A}$-module and $F$ be a Hilbert $\mathcal{B}$-module.

		Let $\phi: \mathcal{A} \rightarrow \mathcal{B}$ be a linear map. A map $\Phi: E \rightarrow F$ is said to be 
		\begin{enumerate}
			\item a $\phi-$map, if $$\langle \Phi(x), \Phi(y) \rangle = \phi(\langle x , y \rangle), $$ for all $x, y \in E$;
                \item continuous, if $\Phi$ is a $\phi-$map and $\phi$ is continuous;
			\item a $\phi-$morphism, if $\Phi$ is a $\phi-$map and $\phi$ is a $^*-$morphism;
			\item  completely positive, if $\Phi$ is a $\phi-$map and $\phi$ is completely positive.
		\end{enumerate}
	\end{definition}

	 The set $\langle E, E \rangle$ denotes the closure of the linear span of $\{\langle x,y\rangle: x,y \in E\}.$ If $\langle E, E \rangle = \mathcal{A}$, then $E $ is said to be a full Hilbert module.

Next, let $E$ and $F$ be Hilbert modules over a pro-$C^*$-algebra $\mathcal{B}.$  A map $T: E \rightarrow F$ is said to be a $\mathcal{B}$-module map if $T$ is $\mathcal{B}-$linear, that is, 
  $$ T(e_1 + e_2) = T(e_1) + T(e_2) \text{ and } T(e b) = T(e)b,$$
  for all $e, e_1, e_2 \in E$ and $b \in \mathcal{B}$.

\begin{definition}\cite[Definition 2.1.1]{MJbook}
A $\mathbb{C}$- and $\mathcal{B}$-linear map $T: E \rightarrow F$ is called a bounded operator\index{bounded operator} if, for each $p \in S(\mathcal{B})$, there exists a constant $M_p > 0$ such that  
\[
\|T(x)\|_{p_F} \leq M_p \|x\|_{p_E},
\]  
for all $x \in E$.  
\end{definition}

A bounded $\mathcal{B}$-linear map $T: E \rightarrow F$ is said to be adjointable if there exists a map $T^*: F \rightarrow E$ such that, for all $\xi\in E$ and $\eta\in F$, the following condition holds: \begin{equation*}
		\langle T\xi, \eta \rangle = \langle \xi, T^*\eta \rangle.
	\end{equation*}
   By $\mathcal{L}_\mathcal{B}(E,F),$ we denote the set of all bounded adjointable $\mathcal{B}$-module operators from $E$ to $F$ with inner-product defined by 
   \begin{equation*}
   \langle T, S \rangle := T^*S,\; \text{ for}\; T, S \in \mathcal{L}_\mathcal{B}(E,F).
   \end{equation*}
      Note that $\mathcal{L}_\mathcal{B}(E,F)$ is a Hilbert $\mathcal{L}_\mathcal{B}(E)$-module with the module action 

   \begin{equation*}
   (T,S) \rightarrow TS, \; \text{ for} \; T \in \mathcal{L}_\mathcal{B}(E,F)\; \text{ and} \;S \in {L}_\mathcal{B}(E).
   \end{equation*}
   We denote the set $\mathcal{L}_\mathcal{B}(E,E)$ by $\mathcal{L}_\mathcal{B}(E).$ The set $\mathcal{L}_\mathcal{B}(E)$ in fact forms a pro-$C^*$-algebra (see \cite[Theorem 2.2.6]{MJbook}).

   An operator $T \in \mathcal{L_B}(E)$ is said to be positive, denoted by $T \geq 0$, if   $\langle Tx, x\rangle \geq 0$ for all $x \in E.$ For $T, S \in \mathcal{L}_{\mathcal{B}}(E)$, we write $T \leq S$ ( or $S \geq T$) to mean that $S - T$ is positive. Note that $T$ is positive if and only if there exists $S \in \mathcal{L}_{\mathcal{B}}(E)$ such that $T = S^*S$; this equivalence follows from the functional calculus.

      An operator $T \in \mathcal{L_B}(E)$ is called a projection if $P^*=P^2 = P.$

\begin{definition}\cite{MJbook}\label{leftaction}
A Hilbert $\mathcal{B}$-module $E$ is called a Hilbert $\mathcal{A}\mathcal{B}$-module if there exists a non-degenerate $^*-$homomorphism $\tau:\mathcal{A} \rightarrow \mathcal{L}_\mathcal{B}(E)$.

    In this case, we identify $a.e $ with $\tau(a).e$ for all $a \in \mathcal{A}$ and $e \in E.$
\end{definition}

By a Hilbert $\mathcal{B}$-module, we refer to a Hilbert (right) $\mathcal{B}$-module for any pro-$C^*$-algebra $\mathcal{B}$. A two-sided Hilbert $\mathcal{B}$-module is a Hilbert $\mathcal{BB}$-module.

We now state Paschke's GNS construction for completely positive maps on  pro-$C^*$-algebras (see \cite{KS} for more details).  
	
	\begin{theorem}\cite[Theorem 1]{KS}\label{Paschpro} 
		Let 
        $\phi:\mathcal{A} \rightarrow \mathcal{B}$ be a continuous completely positive map. Then there exists a Hilbert $\mathcal{B}$-module $X,$ a unital continuous representation $\pi_\phi: \mathcal{A} \rightarrow \mathcal{L}_\mathcal{B}(X),$ and an element $\xi \in X$ such that 
		$$\phi(a) = \langle \xi, \pi_\phi(a) \xi \rangle,$$
		for all $a \in \mathcal{A}$. Moreover, the set $\chi_\phi = \text{span}\{\pi_\phi(a)(\xi b): a \in \mathcal{A}, b \in \mathcal{B}\}$ is a dense subspace of $X.$
	\end{theorem}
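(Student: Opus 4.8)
The plan is to carry out Paschke's GNS construction, adapted to the pro-$C^*$ setting, and then deal separately with the topological subtleties coming from the family of seminorms. First I would form the algebraic tensor product $\mathcal{A} \odot \mathcal{B}$, equip it with the right $\mathcal{B}$-module action $(a \otimes b)\cdot c = a \otimes bc$, and define a $\mathcal{B}$-valued sesquilinear form on elementary tensors by
$$\langle a_1 \otimes b_1, \, a_2 \otimes b_2 \rangle = b_1^* \, \phi(a_1^* a_2) \, b_2 .$$
The crucial point is positivity. For $x = \sum_{i=1}^n a_i \otimes b_i$ one gets $\langle x, x\rangle = \sum_{i,j} b_i^* \phi(a_i^* a_j) b_j = \mathbf{b}^* \, \phi^{(n)}\big(R^* R\big)\, \mathbf{b}$, where $R = [a_1 \;\cdots\; a_n]$ is a row over $\mathcal{A}$ and $\mathbf{b} = (b_1,\dots,b_n)^T$. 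Since $R^* R = [a_i^* a_j] \ge 0$ in $M_n(\mathcal{A})$ and $\phi^{(n)}$ is positive, $\phi^{(n)}(R^*R) \ge 0$ in $M_n(\mathcal{B})$, whence $\langle x,x\rangle \ge 0$. Thus the form is a $\mathcal{B}$-valued semi-inner product.

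Next I would pass to the quotient by the null submodule $N = \{x : \langle x, x\rangle = 0\}$. Using the Cauchy--Schwarz inequality for $\mathcal{B}$-valued semi-inner products, $N$ is a $\mathcal{B}$-submodule and the form descends to a genuine inner product on $(\mathcal{A}\odot\mathcal{B})/N$. Completing this quotient with respect to the family $\{\|\cdot\|_p\}_{p \in S(\mathcal{B})}$ given by $\|\cdot\|_p = \sqrt{p(\langle \cdot,\cdot\rangle)}$ yields a Hilbert $\mathcal{B}$-module $X$. The left action is defined on elementary tensors by $\pi_\phi(a)(a'\otimes b) = (aa')\otimes b$; a direct computation gives the adjoint relation $\langle \pi_\phi(a) x, y\rangle = \langle x, \pi_\phi(a^*) y\rangle$ together with $\pi_\phi(ab) = \pi_\phi(a)\pi_\phi(b)$ and $\pi_\phi(1_\mathcal{A}) = \mathrm{id}$, so $\pi_\phi$ is a unital $*$-representation on the dense submodule. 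Taking $\xi$ to be the class of $1_\mathcal{A} \otimes 1_\mathcal{B}$ (adjoining a unit to, or using an approximate identity of, $\mathcal{B}$ if it is nonunital) gives $\langle \xi, \pi_\phi(a)\xi\rangle = \phi(a)$, and since $\pi_\phi(a)(\xi b) = a \otimes b$, the span $\chi_\phi$ is exactly the dense pre-completion, yielding the density assertion.

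The main obstacle is showing that each $\pi_\phi(a)$ extends to a bounded adjointable operator on $X$ and that $a \mapsto \pi_\phi(a)$ is continuous into the pro-$C^*$-algebra $\mathcal{L}_\mathcal{B}(X)$. The naive $C^*$-estimate $a^*a \le \|a\|^2 1_\mathcal{A}$ is unavailable globally, because no single $C^*$-seminorm dominates the whole family on $\mathcal{A}$, so the inequality fails in some completions. I would resolve this one seminorm at a time. Fix $p \in S(\mathcal{B})$; continuity of $\phi$ provides $q \in S(\mathcal{A})$ through which the composite $\mathcal{A} \xrightarrow{\phi} \mathcal{B} \to \mathcal{B}_p$ factors, inducing a completely positive map $\phi_{q,p}\colon \mathcal{A}_q \to \mathcal{B}_p$ between the $C^*$-completions. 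In $M_n(\mathcal{A}_q)$ the genuine $C^*$-inequality $R^*(a^*a)R \le q(a)^2\, R^* R$ holds; applying the positive map $\phi_{q,p}^{(n)}$ and compressing by $\mathbf{b}$ gives $p(\langle \pi_\phi(a) x, \pi_\phi(a) x\rangle) \le q(a)^2\, p(\langle x, x\rangle)$. This simultaneously shows $\pi_\phi(a)$ is bounded (so the adjoint relation above makes it adjointable) and that $\pi_\phi$ is continuous, with $M_p = q(a)$ in the sense of Definition 2.1.1.

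Equivalently, one may organize the entire argument as a projective limit: for each pair $(q,p)$ apply the classical Paschke--Stinespring theorem to the $C^*$-level map $\phi_{q,p}$, obtaining Hilbert $\mathcal{B}_p$-modules $X_p$ with representations $\pi_p$ and cyclic vectors $\xi_p$, verify that the connecting $*$-homomorphisms make these into an inverse system, and assemble $X = \varprojlim X_p$, $\pi_\phi = \varprojlim \pi_p$, and $\xi = (\xi_p)$. This route performs every order-theoretic estimate inside honest $C^*$-algebras and thereby sidesteps the global positivity difficulty altogether.
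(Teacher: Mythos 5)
The paper does not prove this statement at all---it is quoted verbatim from \cite[Theorem 1]{KS}---so the only meaningful comparison is with that cited source, and your construction is essentially the proof given there: Paschke's GNS construction on the algebraic tensor product $\mathcal{A}\odot\mathcal{B}$, with positivity of the $\mathcal{B}$-valued form obtained from $\phi^{(n)}$ applied to $[a_i^*a_j]\geq 0$, and with the unavailable global bound $a^*a\leq\|a\|^2 1_{\mathcal{A}}$ correctly replaced by the seminorm-wise factorization $\phi_{q,p}\colon\mathcal{A}_q\to\mathcal{B}_p$ through the $C^*$-completions, yielding the estimate $p(\langle\pi_\phi(a)x,\pi_\phi(a)x\rangle)\leq q(a)^2\,p(\langle x,x\rangle)$ that gives boundedness, adjointability, and continuity of $\pi_\phi$ at once. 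Your identification of $\chi_\phi$ with the image of $\mathcal{A}\odot\mathcal{B}$ settles the density claim, and the inverse-limit reorganization you sketch is the standard Phillips--Joi\c{t}a technique, so the proposal is correct and takes the same route as the cited proof.
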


The following theorem from \cite{BR} provides a Stinespring-like construction for completely positive maps between two Hilbert pro-$C^*$-modules. We will extensively use this construction for our results. The Hilbert \(\mathcal{B}\)-module \(X\) in this context is the one obtained from Paschke's GNS construction (Theorem~\ref{Paschpro}).

\begin{theorem}\cite[Theorem 3.9]{BR}\label{BRmain}
Let $\phi: \mathcal{A} \rightarrow \mathcal{B}$ be a continuous completely positive map between unital pro-$C^*$-algebras $\mathcal{A}$ and $\mathcal{B}$. Let $E$ be a Hilbert $\mathcal{A}$-module, $F$ be a Hilbert $\mathcal{BB}$-module and $\Phi: E \rightarrow F$ be a $\phi-$map. Then there exists a Hilbert $\mathcal{B}$-module $D$, a vector $\xi \in X$, and triples $\left(\pi_\phi,V_\phi,K_\phi\right)$ and $\left(\pi_\Phi,W_\Phi,K_\Phi\right)$ such that 
		\begin{enumerate}
			
			\item $K_\phi$ and $K_\Phi$ are  Hilbert $\mathcal{B}$-modules;
			\item$\pi_\phi: \mathcal{A} \rightarrow \mathcal{L}_\mathcal{B}(K_\phi)$ is a unital representation of $\mathcal{A}$;
			\item $\pi_\Phi: E \rightarrow \mathcal{L}_\mathcal{B}(K_\phi,K_\Phi)$ is a $\pi_\phi-$morphism;
			\item $V_\phi: D \rightarrow K_\phi$ and $W_\Phi: F \rightarrow K_\Phi$ are bounded linear operators such that 
			$$\phi(a)I_D = {V_\phi}^*\pi_\phi(a)V_\phi,$$ for all $a \in \mathcal{A}$, and
			$$\Phi(z) = {W_\Phi}^*\pi_\Phi(z)V_\phi (1_\mathcal{B} \otimes \xi),$$ for all $z \in E.$
		\end{enumerate}
\end{theorem}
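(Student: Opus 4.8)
The plan is to obtain all of the data by first invoking Paschke's GNS construction for $\phi$ and then manufacturing the Stinespring triple for the $\phi$-map $\Phi$ out of an interior tensor product of Hilbert modules. I would begin by applying Theorem~\ref{Paschpro} to the continuous completely positive map $\phi$, which produces the Hilbert $\mathcal{B}$-module $X$, the unital continuous representation $\pi_\phi\colon\mathcal{A}\to\mathcal{L}_\mathcal{B}(X)$, and the vector $\xi\in X$ satisfying $\phi(a)=\langle\xi,\pi_\phi(a)\xi\rangle$, with $\chi_\phi=\operatorname{span}\{\pi_\phi(a)(\xi b):a\in\mathcal{A},\,b\in\mathcal{B}\}$ dense in $X$. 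This already supplies the representation required in items (1) and (2).

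For the dilation of $\phi$ in item~(4), I would realise $K_\phi$ as an interior tensor product built from $X$ and a Hilbert $\mathcal{B}$-module $D$ equipped with a generating vector $1_\mathcal{B}\otimes\xi$, with $\pi_\phi$ acting in the first leg, and let $V_\phi\colon D\to K_\phi$ carry $1_\mathcal{B}\otimes\xi$ to the canonical copy of $\xi$ in $K_\phi$. The GNS relation $\phi(a)=\langle\xi,\pi_\phi(a)\xi\rangle$ then yields, after a short adjoint computation, the identity $\phi(a)I_D=V_\phi^*\pi_\phi(a)V_\phi$; boundedness and adjointability of $V_\phi$ are the standard facts for the creation operators of an interior tensor product. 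For $\phi(a)I_D$ to make sense as an element of $\mathcal{L}_\mathcal{B}(D)$ one uses the left $\mathcal{B}$-action on $D$. This step is simply the module analogue of the minimal Stinespring dilation and rests entirely on the GNS relation.

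The heart of the argument is the construction of $K_\Phi$ and $\pi_\Phi$. Here I would form the interior tensor product $K_\Phi:=E\otimes_{\pi_\phi}K_\phi$, using the left action of $\mathcal{A}$ on $K_\phi$ afforded by $\pi_\phi$, and define $\pi_\Phi\colon E\to\mathcal{L}_\mathcal{B}(K_\phi,K_\Phi)$ by $\pi_\Phi(x)k=x\otimes k$. Its defining inner product $\langle x\otimes k,y\otimes k'\rangle=\langle k,\pi_\phi(\langle x,y\rangle)k'\rangle$ gives at once $\pi_\Phi(x)^*\pi_\Phi(y)=\pi_\phi(\langle x,y\rangle)$, so that $\pi_\Phi$ is a $\pi_\phi$-morphism as required in items~(1) and~(3), and it exhibits the adjoint $\pi_\Phi(x)^*(y\otimes k)=\pi_\phi(\langle x,y\rangle)k$. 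For $W_\Phi$ I would use the $\phi$-map identity $\langle\Phi(x),\Phi(y)\rangle=\phi(\langle x,y\rangle)$: since also $\langle x\otimes\xi,y\otimes\xi\rangle=\langle\xi,\pi_\phi(\langle x,y\rangle)\xi\rangle=\phi(\langle x,y\rangle)$, the assignment $\Phi(x)b\mapsto(x\otimes\xi)b$ preserves the $\mathcal{B}$-valued inner product and therefore extends to an isometry of $\overline{\operatorname{span}}\{\Phi(x)b\}$ onto $\overline{\operatorname{span}}\{(x\otimes\xi)b\}\subseteq K_\Phi$. Taking $W_\Phi$ to be the adjoint of this isometry (extended by zero) gives $W_\Phi^*(z\otimes\xi)=\Phi(z)$, and since $\pi_\Phi(z)V_\phi(1_\mathcal{B}\otimes\xi)=z\otimes\xi$, this is exactly the second identity of item~(4).

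The step I expect to be the main obstacle is making $W_\Phi$ a genuine bounded adjointable operator on all of $F$. Over $\mathbb{C}$ (the Hilbert-space setting of \cite{BRS}) the isometry above extends by orthogonality with no effort, but over a pro-$C^*$-algebra the submodule $\overline{\operatorname{span}}\{\Phi(x)b\}$ need not be orthogonally complemented in $F$, so the adjointability of $W_\Phi$ has to be extracted from the $\phi$-map identity itself rather than from Hilbert-space orthogonality; this is where the hypothesis that $F$ is a two-sided Hilbert $\mathcal{B}$-module enters. A secondary but pervasive technical point is that each construction—the two interior tensor products, the passage to the quotients by the null submodules, and the completions—must be carried out seminorm by seminorm over the family $S(\mathcal{B})$, with continuity of $\phi$ supplying the uniform estimates needed to pass to the completed modules. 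None of these are conceptually hard, but they are precisely what turns the algebraic identities above into statements about honest Hilbert pro-$C^*$-modules.
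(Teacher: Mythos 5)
A preliminary remark: the paper does not prove this statement at all---it is imported verbatim from \cite[Theorem 3.9]{BR} as background---so your attempt can only be compared with the construction of \cite{BR} to the extent that the present paper records it: $D$ is the Hilbert module associated with $\mathcal{B}\otimes X$, the vector $\xi$ comes from Paschke's construction (Theorem~\ref{Paschpro}), and the dilation is minimal in the sense that $K_\phi=[\pi_\phi(\mathcal{A})V_\phi D]$ and $K_\Phi=[\pi_\Phi(E)V_\phi D]$ (cited from \cite[Remark 3.12]{BR}). Your skeleton is consistent with all of this: Paschke GNS for $\phi$, a KSGNS-type dilation of $a\mapsto\phi(a)I_D$ giving $(\pi_\phi,V_\phi,K_\phi)$, and the interior tensor product $K_\Phi=E\otimes_{\pi_\phi}K_\phi$ with $\pi_\Phi(x)k=x\otimes k$ and $\pi_\Phi(x)^*\pi_\Phi(y)=\pi_\phi(\langle x,y\rangle)$, which delivers items (1)--(3).

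The genuine gap is the step you flag and then leave open: producing $W_\Phi$ as a bounded \emph{adjointable} operator in $\mathcal{L}_\mathcal{B}(F,K_\Phi)$, which is the analytic content of item (4) and the only place where the hypothesis that $F$ is a Hilbert $\mathcal{BB}$-module can matter. Your construction gives a unitary module map $U$ from $F_0:=\overline{\operatorname{span}}\{\Phi(x)b : x\in E,\ b\in\mathcal{B}\}$ onto $K_0:=\overline{\operatorname{span}}\{(x\otimes\xi)b\}$ (with $\xi$ standing, as in your write-up, for $V_\phi(1_\mathcal{B}\otimes\xi)\in K_\phi$), and then takes ``the adjoint of this isometry (extended by zero)''. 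But that extension needs either $F=F_0\oplus F_0^{\perp}$ (to define $W_\Phi$ on all of $F$) or $K_\Phi=K_0\oplus K_0^{\perp}$ (to define $W_\Phi^*$ on all of $K_\Phi$), and neither decomposition is available: closed submodules of Hilbert modules over pro-$C^*$-algebras need not be complemented, and $K_0$ has no reason to exhaust $K_\Phi$, since $K_\Phi=[\pi_\Phi(E)V_\phi D]$ with $D=\mathcal{B}\otimes X$ far larger than $(1_\mathcal{B}\otimes\xi)\mathcal{B}$. You acknowledge precisely this failure, and your only remedy is the assertion that adjointability ``has to be extracted from the $\phi$-map identity itself'' and that this is ``where the hypothesis that $F$ is a two-sided Hilbert $\mathcal{B}$-module enters''---but you never say how it enters, so item (4) remains unproved; this is a missing idea, not a missing routine verification. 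What a complete argument must contain is an explicit definition of $W_\Phi$, or of $W_\Phi^*$ on the generating set $\{\pi_\Phi(z)\pi_\phi(a)V_\phi(b\otimes x)\}$ of $K_\Phi$ (it is here that the left action of $\mathcal{B}$ on $F$ lets one write down the intended images in $F$), a complete-positivity estimate establishing boundedness seminorm by seminorm, and a direct check that the two maps so defined are mutually adjoint. A secondary instance of the same looseness: you define $V_\phi$ only by its value on the single vector $1_\mathcal{B}\otimes\xi$ and assert that $\phi(a)I_D=V_\phi^*\pi_\phi(a)V_\phi$ follows from $\phi(a)=\langle\xi,\pi_\phi(a)\xi\rangle$; the GNS identity is one matrix entry, whereas the asserted equation is an identity of operators on all of $D$, so it cannot even be checked before $V_\phi$ is defined on all of $D$ (KSGNS-style, $d\mapsto 1_\mathcal{A}\otimes d$).
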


Note that here $D$ is the Hilbert pro-$C^*$-module associated with $B \otimes X.$



\section{Main Results}

Let $\mathcal{A}$ and $\mathcal{B}$ be unital pro-$C^*$-algebras. Let $E$ be a Hilbert $\mathcal{A}$-module and $F$ be a Hilbert $\mathcal{BB}$-module. 

Define the set
$$\mathcal{CP}(E,F)= \{\Phi: E \rightarrow F: \Phi \text{ is continuous, completely positive}\}.$$

We would like to recall that  each $\Phi \in \mathcal{CP}(E,F),$ has a Stinespring's construction $(\pi_\Phi, K_\phi, K_\Phi, V_\phi, W_\Phi)$ attached to it by Theorem \ref{BRmain}.

From now onwards, apart from the mentioned notations, we assume $E$ to be full.

\begin{definition}[Equivalence relation]
    For $\Phi, \Psi \in \mathcal{CP}(E,F),$ we define the relation ``$\sim$" as follows: $$\Phi \sim \Psi \Leftrightarrow \langle \Phi(x), \Phi(x) \rangle = \langle \Psi(x), \Psi(x) \rangle,$$ for all $x \in E.$
\end{definition}
 It is easy to observe that $``\sim"$ is an equivalence relation.

 \begin{remark}
  Let $\Phi_1, \Phi_2 \in \mathcal{CP}(E,F)$ such that  $\Phi_1 \sim \Phi_2$. 
  \begin{enumerate}
      \item  For $x \in E,$ we have 
    \begin{equation*}
        \begin{split}
             \phi_1\left(\langle x, x \rangle\right) 
             &= \langle \Phi_1(x), \Phi_1(x) \rangle \\ 
             &= \langle \Phi_2(x), \Phi_2(x) \rangle \\
             &= \phi_2\left(\langle x, x \rangle\right).
        \end{split}
    \end{equation*} Since $E$ is full, by polarization, we get $\phi_1 = \phi_2$.
   
    \item Let $G$ be a two-sided Hilbert pro-$C^*$-module over a pro-$C^*$-algebra $\mathcal{C}.$ Let $\Theta \in \mathcal{CP}(F,G)$. Then $\Theta \circ \Phi_1 \sim \Theta \circ \Phi_2$. Indeed, for $x \in E,$ we have 
    \begin{equation*}
        \begin{split}
\left \langle \Theta \circ \Phi_1(x) , \Theta \circ \Phi_1(x)\right \rangle 
             &= \theta\circ\phi_1(\langle x,x\rangle) \\
               &= \theta\circ\phi_2(\langle x,x\rangle) \\
             &=  \left \langle \Theta \circ \Phi_2(x) , \Theta \circ \Phi_2(x)\right \rangle. 
        \end{split}
    \end{equation*}
       \end{enumerate}
 \end{remark}

\begin{proposition}\label{PhiPsi}
    Let $\Phi, \Psi \in \mathcal{CP}(E,F).$ Then the following are equivalent:
    \begin{enumerate}
        \item $\Phi \sim \Psi$;
        \item there exists a partial isometry $V \in \mathcal{L_B}(F)$ such that 
        \begin{itemize}
            \item[(i)]  $VV^*={W_\Phi}^*W_\Phi,$ and  $V^*V = {W_\Psi}^*W_\Psi$; 
            \item[(ii)] $\Phi(x) = V\Psi(x)$ for all $x \in E.$
        \end{itemize}
        Here, $W_\Phi$ and $W_\Psi$ are as defined in Theorem \ref{BRmain}.
    \end{enumerate}
        
\end{proposition}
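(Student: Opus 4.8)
The plan is to exploit the observation recorded in the preceding remark that $\Phi\sim\Psi$ forces the underlying maps to agree, $\phi=\psi$. Consequently the Paschke--Stinespring data attached to $\phi$ in Theorem \ref{BRmain}, namely $(\pi_\phi,V_\phi,K_\phi)$ together with the fixed vector $\eta:=V_\phi(1_\mathcal{B}\otimes\xi)\in K_\phi$, may be taken common to $\Phi$ and $\Psi$, while both $\pi_\Phi$ and $\pi_\Psi$ are $\pi_\phi$-morphisms. The entire difference between the two constructions is thus concentrated in the morphisms $\pi_\Phi,\pi_\Psi$ and the operators $W_\Phi,W_\Psi$, and my aim is to manufacture a single intertwining isomorphism $u\colon K_\Psi\to K_\Phi$ and to set $V:=W_\Phi^{*}\,u\,W_\Psi\in\mathcal{L}_\mathcal{B}(F)$.

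For the implication $(2)\Rightarrow(1)$, which I would dispatch first since it is purely formal, I would start from $\Phi(x)=V\Psi(x)$ and compute
\[
\langle \Phi(x),\Phi(x)\rangle=\langle V\Psi(x),V\Psi(x)\rangle=\langle \Psi(x),V^{*}V\,\Psi(x)\rangle=\langle \Psi(x),{W_\Psi}^{*}W_\Psi\,\Psi(x)\rangle .
\]
The proof is then finished by the identity ${W_\Psi}^{*}W_\Psi\,\Psi(x)=\Psi(x)$, which follows from $\Psi(x)={W_\Psi}^{*}\pi_\Psi(x)\eta$ together with the partial-isometry relation ${W_\Psi}^{*}W_\Psi{W_\Psi}^{*}={W_\Psi}^{*}$, giving ${W_\Psi}^{*}W_\Psi\,{W_\Psi}^{*}\pi_\Psi(x)\eta={W_\Psi}^{*}\pi_\Psi(x)\eta=\Psi(x)$ and hence $\langle\Phi(x),\Phi(x)\rangle=\langle\Psi(x),\Psi(x)\rangle$.

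For $(1)\Rightarrow(2)$ I would first record that, since $\pi_\Phi$ and $\pi_\Psi$ are both $\pi_\phi$-morphisms, $\pi_\Phi(x)^{*}\pi_\Phi(y)=\pi_\phi(\langle x,y\rangle)=\pi_\Psi(x)^{*}\pi_\Psi(y)$ for all $x,y\in E$. This is exactly what is needed to define $u$ on the generating vectors by $u\bigl(\pi_\Psi(x)\zeta\bigr)=\pi_\Phi(x)\zeta$ for $x\in E,\ \zeta\in K_\phi$ and to verify that it preserves the $\mathcal{B}$-valued inner product, since $\langle\pi_\Psi(x)\zeta,\pi_\Psi(y)\zeta'\rangle=\langle\zeta,\pi_\phi(\langle x,y\rangle)\zeta'\rangle$ depends only on $\phi$; thus $u$ is well defined, $\mathcal{B}$-linear and isometric, and by minimality of the modules $K_\Psi,K_\Phi$ it extends to a unitary intertwiner with $u\,\pi_\Psi(x)=\pi_\Phi(x)$. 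Setting $V:=W_\Phi^{*}u\,W_\Psi$, the relation $W_\Psi\Psi(x)=\pi_\Psi(x)\eta$ yields $V\Psi(x)=W_\Phi^{*}u\,\pi_\Psi(x)\eta=W_\Phi^{*}\pi_\Phi(x)\eta=\Phi(x)$, which is (ii); and the identities $W_\Phi W_\Phi^{*}=I_{K_\Phi}$, $W_\Psi W_\Psi^{*}=I_{K_\Psi}$, $u^{*}u=I_{K_\Psi}$, $uu^{*}=I_{K_\Phi}$ collapse $VV^{*}$ and $V^{*}V$ to ${W_\Phi}^{*}W_\Phi$ and ${W_\Psi}^{*}W_\Psi$ respectively, establishing (i) and simultaneously showing that $V$ is a partial isometry via $VV^{*}V=W_\Phi^{*}(W_\Phi W_\Phi^{*})u\,W_\Psi=V$.

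I expect the main obstacle to lie not in these formal manipulations but in pinning down the precise behaviour of $W_\Phi,W_\Psi$ inside the construction of Theorem \ref{BRmain}. Specifically, I must justify that each $W_\Phi$ is a coisometry onto the minimally generated module $K_\Phi=\overline{\operatorname{span}}\{\pi_\Phi(x)\zeta\,b:x\in E,\ \zeta\in K_\phi,\ b\in\mathcal{B}\}$, so that $W_\Phi W_\Phi^{*}=I_{K_\Phi}$ fixes every $\pi_\Phi(x)\eta$ (giving $W_\Phi\Phi(x)=\pi_\Phi(x)\eta$) while ${W_\Phi}^{*}W_\Phi$ is a genuine projection in $\mathcal{L}_\mathcal{B}(F)$. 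The well-definedness of $u$ --- that the assignment $\pi_\Psi(x)\zeta\mapsto\pi_\Phi(x)\zeta$ is independent of the representation of a vector as a finite sum --- is the technical heart, and it rests squarely on the coincidence $\pi_\Phi(x)^{*}\pi_\Phi(y)=\pi_\Psi(x)^{*}\pi_\Psi(y)$ forced by $\phi=\psi$.
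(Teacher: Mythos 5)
Your proposal is correct and follows essentially the same route as the paper: both reduce to $\phi=\psi$, build the same intertwining unitary between $K_\Phi$ and $K_\Psi$ on the generating vectors (your $u$ is the paper's $U_2^*$; the paper keeps $K_\phi$ and $K_\psi$ distinct and relates them by a unitary $U_1$ from \cite[Corollary 3.14]{BR} where you simply identify them), define $V=W_\Phi^*u\,W_\Psi$, and verify (i), (ii) and the converse using the coisometry property of $W_\Phi,W_\Psi$ exactly as the paper does.
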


\begin{proof}
    Suppose $\Phi \sim \Psi$. Let $(\pi_\Phi, K_\phi, K_\Phi, V_\phi, W_\Phi)$  and  $(\pi_\Psi, K_\psi, K_\Psi, V_\psi, W_\Psi)$ be the Stinespring constructions associated with $\Phi$ and $\Psi$ respectively.   Then from \cite[Corollary 3.14]{BR}, there exists a unitary operator $U_1: K_\phi \rightarrow K_\psi$ such that $V_\psi = U_1V_\phi.$

    For $x \in E$ and $d \in D,$ we have
    \begin{equation*}
        \begin{split}
            \langle \pi_\Psi(x) V_\psi d,\pi_\Psi(x) V_\psi d  \rangle 
            &= \langle {V_\psi}^*{\pi_\Psi(x)}^*\pi_\Psi(x)V_\psi d,d \rangle \\
            &= \langle {V_\psi}^* \pi_\psi(\langle x, x \rangle) V_\psi d,d \rangle \\
            &= \langle \psi(\langle x, x \rangle) d,d \rangle \\
            &= \langle \phi(\langle x, x \rangle) d,d \rangle \\
            &= \langle \pi_\Phi(x) V_\phi d,\pi_\Phi(x) V_\phi d  \rangle. 
        \end{split}        
    \end{equation*}
     Since $K_\Psi = [\pi_\Psi(X)V_\psi D]$ and $K_\Phi = [\pi_\Phi(X)V_\phi D]$ (see \cite[Remark 3.12]{BR}) , we can define an operator $U_2: K_\Phi \rightarrow K_\Psi$ such that 
    \begin{equation}
        U_2(\pi_\Phi(x)V_\phi d) = \pi_\Psi(x)V_\psi d,
    \end{equation}
    for all $d \in D.$ Clearly $U_2$ is unitary.

    We can easily show that $U_2\pi_\Phi(x) = \pi_\Psi(x)U_1.$ Indeed, using $[\pi_\phi(\mathcal{A})V_\phi D] = K_\phi$ (\cite[Remark 3.12]{BR}),  for  $a \in \mathcal{A}$ and  $d \in D,$ we have       
    \begin{equation*}
    \begin{split}
        U_2\pi_\Phi(x)\big( \pi_\phi(a)V_\phi d \big) &= U_2\big(\pi_\Phi(xa)V_\phi d \big) \\
        &= \pi_\Psi(xa)V_\psi d \\
        &= \pi_\Psi(x)\big(\pi_\psi(a)V_\psi d\big) \\
         &= \pi_\Psi(x)U_1\big(\pi_\phi(a)V_\phi d\big).      
    \end{split}
    \end{equation*}

Define $V= {W_\Phi}^*{U_2}^*W_\Psi$. Then,
$$VV^* = {W_\Phi}^*{U_2}^*W_\Psi{W_\Psi}^*U_2 W_\Phi = {W_\Phi}^*W_\Phi$$ and 
$$V^*V = {W_\Psi}^*U_2W_\Phi{W_\Phi}^*{U_2}^* W_\Psi = {W_\Psi}^*W_\Psi.$$
Note that $V$ is a partial isometry (see \cite[Proposition 3.1.4]{MJbook}).

Hence, for all $x \in E,$ we have
\begin{equation*}
    \begin{split}
        \Phi(x) &= {W_\Phi}^*\pi_\Phi(x)V_\phi (1_\mathcal{B} \otimes \xi) \\
        &= {W_\Phi}^*\pi_\Phi(x){U_1}^*V_\psi (1_\mathcal{B} \otimes \xi) \\
        &= {W_\Phi}^*{U_2}^*\pi_\Psi(x)V_\psi (1_\mathcal{B} \otimes \xi) \\
        &= {W_\Phi}^*{U_2}^*W_\Psi {W_\Psi}^*\pi_\Psi(x)V_\psi (1_\mathcal{B} \otimes \xi) \\
        &= V\Psi(x).
    \end{split}
\end{equation*}

Conversely, suppose there exists an operator $V \in \mathcal{L_B}(F)$ satisfying conditions (i) and (ii). Then, for $x \in E,$ we have
\begin{equation}
    \begin{split}
        \left\langle \Phi(x), \Phi(x) \right\rangle &= \left\langle V\Psi(x), V\Psi(x) \right\rangle \\
       &= \left\langle {W_\Psi}^*W_\Psi\Psi(x), \Psi(x) 
       \right\rangle \\
       &=\left\langle {W_\Psi}^*W_\Psi{W_\Psi}^*\pi_\Psi(x)V_\psi (1_\mathcal{B} \otimes \xi), \Psi(x) \right\rangle \\
       &=\left\langle {W_\Psi}^*\pi_\Psi(x)V_\psi (1_\mathcal{B} \otimes \xi), \Psi(x) \right\rangle \\
        &= \left\langle \Psi(x), \Psi(x) \right\rangle.
    \end{split}
\end{equation}
Hence, $\Phi \sim \Psi$.
 
\end{proof}

\begin{corollary}
 Let $\Phi, \Psi \in \mathcal{CP}(E,F).$ 
 Then the following are equivalent:
 \begin{enumerate}
     \item  $\Phi \sim \Psi$; 
     \item  Stinespring's constructions of $\Phi$ and $\Psi$ are related in the following manner:
     \begin{enumerate}
         \item[(i)] $V_\psi = U_1V_\phi$;
         \item[(ii)] $U_2\pi_\Phi(.) = \pi_\Psi(.)U_1$;
         \item[(iii)] $W_\Phi= {U_2}^*W_\Psi V^*,$ where $V$ is defined as in Proposition \ref{PhiPsi}.
     \end{enumerate}
 \end{enumerate}
\end{corollary}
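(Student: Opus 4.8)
The plan is to extract both directions directly from the proof of Proposition \ref{PhiPsi}, since this corollary is essentially a precise record of the operators $U_1$, $U_2$, $V$ built there.

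For the forward implication, assume $\Phi \sim \Psi$. The proof of Proposition \ref{PhiPsi} already produces a unitary $U_1 : K_\phi \to K_\psi$ with $V_\psi = U_1 V_\phi$ (via \cite[Corollary 3.14]{BR}), which is exactly (i), and a unitary $U_2 : K_\Phi \to K_\Psi$ determined on the dense set $[\pi_\Phi(X) V_\phi D]$ by $U_2(\pi_\Phi(x) V_\phi d) = \pi_\Psi(x) V_\psi d$, for which the intertwining relation $U_2 \pi_\Phi(\cdot) = \pi_\Psi(\cdot) U_1$ is verified there; this is (ii). For (iii) I would start from the definition $V = {W_\Phi}^* {U_2}^* W_\Psi$, pass to the adjoint $V^* = {W_\Psi}^* U_2 W_\Phi$, and compute ${U_2}^* W_\Psi V^* = {U_2}^* W_\Psi {W_\Psi}^* U_2 W_\Phi$. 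The one fact that does the work is that $W_\Psi$ is a coisometry, $W_\Psi {W_\Psi}^* = I_{K_\Psi}$ (the same property already used in the proposition to obtain $VV^* = {W_\Phi}^* W_\Phi$), together with $U_2^* U_2 = I_{K_\Phi}$; these collapse the expression to $W_\Phi$, giving (iii).

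For the converse, assume (i)--(iii) hold, with $V$ as in Proposition \ref{PhiPsi}. I would substitute the three relations into the Stinespring formula $\Phi(x) = {W_\Phi}^* \pi_\Phi(x) V_\phi (1_\mathcal{B} \otimes \xi)$: rewrite ${W_\Phi}^* = V {W_\Psi}^* U_2$ using (iii), push $U_2$ through $\pi_\Phi(x)$ by (ii) to produce $\pi_\Psi(x) U_1$, and replace $U_1 V_\phi$ by $V_\psi$ using (i). This yields $\Phi(x) = V {W_\Psi}^* \pi_\Psi(x) V_\psi (1_\mathcal{B} \otimes \xi) = V \Psi(x)$. Since the same $V$ satisfies $VV^* = {W_\Phi}^* W_\Phi$ and $V^*V = {W_\Psi}^* W_\Psi$ (exactly the partial-isometry computations of the proposition, again using the coisometry property of $W_\Phi$ and $W_\Psi$ and the unitarity of $U_2$), conditions (i) and (ii) of Proposition \ref{PhiPsi} are met, and that proposition gives $\Phi \sim \Psi$ at once.

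The content here is bookkeeping rather than new analysis: no argument beyond Proposition \ref{PhiPsi} is needed. The point to be careful about is that the coisometry identities $W_\Phi {W_\Phi}^* = I_{K_\Phi}$ and $W_\Psi {W_\Psi}^* = I_{K_\Psi}$ coming from the construction in Theorem \ref{BRmain} are invoked at the right places, and that (iii) really is equivalent information to the defining formula for $V$ rather than an extra hypothesis---so the converse cannot secretly reassume $\Phi \sim \Psi$. This equivalence of (iii) with the definition of $V$, and hence the genuine reversibility of the argument, is the only place where one must check that nothing is being smuggled in.
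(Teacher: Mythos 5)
Your proposal is correct and follows essentially the same route as the paper: both directions are read off from Proposition \ref{PhiPsi}, with (iii) obtained by a short algebraic manipulation using the coisometry identities $W_\Phi W_\Phi^* = I_{K_\Phi}$, $W_\Psi W_\Psi^* = I_{K_\Psi}$ and the unitarity of $U_2$. The only difference is that the paper dismisses the implication $2 \Rightarrow 1$ as ``a routine calculation,'' whereas you carry it out explicitly (substituting (i)--(iii) into the Stinespring formula and invoking the converse of Proposition \ref{PhiPsi}), which is exactly the computation the paper has in mind.
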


\begin{proof}
 First assume that $\Phi \sim \Psi$. 
 Let $(\pi_\Phi, K_\phi, K_\Phi, V_\phi, W_\Phi)$ and $(\pi_\Psi, K_\psi$, $K_\Psi, V_\psi, W_\Psi)$ be the Stinespring constructions associated with $\Phi$ and $\Psi$ respectively. Let $U_1, U_2$ be the unitaries as defined in the proof of Proposition \ref{PhiPsi}. Then $V_\psi = U_1V_\phi$ and $U_2\pi_\Phi(x) = \pi_\Psi(x)U_1$, for all $x \in E.$ Morever, with $V$ defined as in Proposition \ref{PhiPsi}, we have $W_\Phi= {U_2}^*W_\Psi V^*.$
    Indeed, post multiplying both sides of $W_\Phi V= {U_2}^* W_\Psi$ by $V^*$, we get $ W_\Phi{W_\Phi}^* W_\Phi= {U_2}^*W_\Psi V^*$ or $W_\Phi= {U_2}^*W_\Psi V^*.$ 

The implication 2. $\Rightarrow$ 1. can be proved by a routine calculation using the given hypothesis.

\end{proof}

We provide the following examples to illustrate the construction of a partial isometry $V$ as described in Proposition \ref{PhiPsi}.

\begin{example} 
   Let $G$ be a Hilbert $\mathcal{A}$-module. It is known that $G^n := \underbrace{G \oplus \cdots \oplus G}_{n\ \text{times}}$ is also a Hilbert $\mathcal{A}$-module. 
   Moreover, $\mathcal{L_A}(G^2, G^5)$ is a Hilbert $\mathcal{L_A}(G^2)$-module. In fact, the pro-$C^*$-algebras $M_2(\mathcal{L_A}(G))$ and $\mathcal{L_A}(G^2)$ are isomorphic. Similarly, we can observe that $M_{5 \times 2}(\mathcal{L_A}(G))$ is identified with $\mathcal{L_A}(G^2, G^5)$ (for further details, see \cite[Corollary 2.2.9]{MJbook}).
   
    Define $\Phi, \Psi: M_2(\mathcal{L_A}(G)) \rightarrow M_{5 \times 2}(\mathcal{L_A}(G))$  by,
    $$ \Phi\left(\begin{pmatrix}
        T_1 & T_2 \\
        T_3 & T_4
    \end{pmatrix}\right) = 
    \begin{pmatrix}
        \frac{1}{2}T_1 & 0 \\
        0 & T_4 \\
        0 & 0 \\
        \frac{1}{2}T_3 & 0 \\
        0 & T_2
    \end{pmatrix},$$  
    and
    $$ \Psi\left(\begin{pmatrix}
        T_1 & T_2 \\
        T_3 & T4
    \end{pmatrix}\right) = 
    \begin{pmatrix}
        \frac{1}{2\sqrt{2}}T_1 &  -\frac{1}{\sqrt{3}}T_4 \\
         \frac{1}{2\sqrt{2}}T_1 &  \frac{1}{\sqrt{3}}T_4 \\
        0 & T_2 \\
        \frac{1}{2}T_3 & 0 \\
        0 & \frac{1}{\sqrt{3}}T_4
    \end{pmatrix}, $$
      for all $\begin{pmatrix}
        T_1 & T_2 \\
        T_3 & T_4
    \end{pmatrix}$ $\in M_2\left(\mathcal{L_A}(G)\right).$
    
Define $\phi :  M_2(\mathcal{L_A}(G)) \rightarrow  M_2(\mathcal{L_A}(G))$ by 
$$\phi\left(\begin{pmatrix}
        T_1 & T_2 \\
        T_3 & T_4
    \end{pmatrix}\right) = \begin{pmatrix}
        \frac{1}{4}T_1 & 0 \\
        0 & T_4
    \end{pmatrix},$$
    for all $\begin{pmatrix}
        T_1 & T_2 \\
        T_3 & T_4
    \end{pmatrix}$ $\in M_2\left(\mathcal{L_A}(G)\right).$
    
Observe that, for any $S, T \in \mathcal{L_A}(G)$, $$\langle \Phi(S), \Phi(T) \rangle = \phi(\langle S, T \rangle) = \langle \Psi(S), \Psi(T) \rangle.$$
Since the underlying map $\phi$ is completely positive, the maps $\Phi, \Psi$ are completely positive. Note that, the map $\Phi$ is degenerate (that is $[\Phi\left(M_2(\mathcal{L_A}(G))\right)(G^2)] \neq G^5$). 

Observe that $M_{5\times2}(\mathcal{L_A}(G))$ is a left $M_5(\mathcal{L_A}(G))$-module. So, we define an operator $V: M_{5 \times 2}(\mathcal{L_A}(G)) \rightarrow  M_{5 \times 2}(\mathcal{L_A}(G))$ by 
$$V\left( \begin{pmatrix}
        T_1 &  T_2 \\
         T_3 &  T_4 \\
        T_5 & T_6 \\
        T_7 & T_8 \\
        T_9 & T_{10}
    \end{pmatrix}\right) := 
     \begin{pmatrix}
        \frac{1}{\sqrt{2}}1_\mathcal{A} & \frac{1}{\sqrt{2}}1_\mathcal{A} & 0 & 0 & 0 \\
         0 & 0 & 0 & 0 & \sqrt{3}1_\mathcal{A}\\
         0 & 0 & 0 &0 & 0 \\
         0 & 0 & 0 & 1_\mathcal{A} & 0 \\
         0 & 0 & 1_\mathcal{A} & 0 & 0 \\
    \end{pmatrix} \begin{pmatrix}
        T_1 &  T_2 \\
         T_3 &  T_4 \\
        T_5 & T_6 \\
        T_7 & T_8 \\
        T_9 & T_{10}
    \end{pmatrix}, $$
for all $\begin{pmatrix}
        T_1 &  T_2 \\
         T_3 &  T_4 \\
        T_5 & T_6 \\
        T_7 & T_8 \\
        T_9 & T_{10}
    \end{pmatrix} \in M_{5\times2}(\mathcal{L_A}(G)).$

Hence, $$\Phi\left(\begin{pmatrix}
        T_1 & T_2 \\
        T_3 & T_4
    \end{pmatrix}\right) = 
   V
    \Psi\left(\begin{pmatrix}
        T_1 & T_2 \\
        T_3 & T_4
    \end{pmatrix}\right),$$
     for all $\begin{pmatrix}
        T_1 & T_2 \\
        T_3 & T_4
    \end{pmatrix}$ $\in M_2\left(\mathcal{L_A}(G)\right).$

\end{example}

Next we see an example in which the maps $\Phi, \Psi $ are non-degenerate.

\begin{example} 
    Let $G$ be a Hilbert $\mathcal{A}$-module. Define $\Phi, \Psi: M_2(\mathcal{L_A}(G)) \rightarrow M_{4 \times 2}(\mathcal{L_A}(G))$  by,
    $$ \Phi\left(\begin{pmatrix}
        T_1 & T_2 \\
        T_3 & T_4
    \end{pmatrix}\right) = 
    \begin{pmatrix}
        \sqrt{2}T_1 & \sqrt{2}T_2 \\
        -T_1 & T_2 \\
        \sqrt{2}T_3 & \sqrt{2}T_4 \\
        -T_3 & T_4 
    \end{pmatrix}$$
    and
    $$ \Psi\left(\begin{pmatrix}
        T_1 & T_2 \\
        T_3 & T_4
    \end{pmatrix}\right) = 
     \begin{pmatrix}
        \sqrt{2}T_1 & \sqrt{2}T_2 \\
        T_1 & -T_2 \\
        \sqrt{2}T_3 & \sqrt{2}T_4 \\
        -T_3 & T_4 
    \end{pmatrix}, $$
     for all $\begin{pmatrix}
        T_1 & T_2 \\
        T_3 & T_4
    \end{pmatrix}$ $\in M_2\left(\mathcal{L_A}(G)\right).$
    
Define $\phi :  M_2(\mathcal{L_A}(G)) \rightarrow  M_2(\mathcal{L_A}(G))$ by 
$$\phi\left(\begin{pmatrix}
        T_1 & T_2 \\
        T_3 & T_4
    \end{pmatrix}\right) = \begin{pmatrix}
        3T_1 & T_2 \\
        T_3 & 3T_4
    \end{pmatrix},$$
    for all $\begin{pmatrix}
        T_1 & T_2 \\
        T_3 & T_4
    \end{pmatrix}$ $\in M_2\left(\mathcal{L_A}(G)\right).$
    
Note that $\phi$ is completely positive, and since $\Phi$ and $\Psi$ are $\phi$-maps, both $\Phi, \Psi$ are completely positive. Observe that, the maps $\Phi, \Psi$ are non-degenerate. 

Hence, $$\Phi\left(\begin{pmatrix}
        T_1 & T_2 \\
        T_3 & T_4
    \end{pmatrix}\right) = 
    \begin{pmatrix}
        1_\mathcal{A} & 0 & 0 & 0  \\
         0 & -1_\mathcal{A} & 0 & 0 \\
         0 & 0 & 1_\mathcal{A} &0  \\
         0 & 0 & 0 & 1_\mathcal{A} 
    \end{pmatrix}
    \Psi\left(\begin{pmatrix}
        T_1 & T_2 \\
        T_3 & T_4
    \end{pmatrix}\right),$$
     for all $\begin{pmatrix}
        T_1 & T_2 \\
        T_3 & T_4
    \end{pmatrix}$ $\in M_2\left(\mathcal{L_A}(G)\right).$

\end{example}

Next we characterize positive elements in $M_n(\mathcal{A}),$ which generalizes  \cite[Lemma 3.13]{VP} for the case of pro-$C^*$-algebras. The proof goes in the similar lines of \cite[Lemma 3.13]{VP}, for the sake of completeness, we provide a proof here.

\begin{lemma}\label{equivdef}
Every positive element in \( M_n(\mathcal{A}) \) can be written as a finite sum of elements of the form \( (a_i^* a_j)_{i,j=1}^n \), where \( a_1, a_2, \dots, a_n \in \mathcal{A} \).
\end{lemma}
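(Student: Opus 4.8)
The plan is to reduce the statement to the definition of positivity in the pro-$C^*$-algebra $M_n(\mathcal{A})$ together with a single entrywise computation, mirroring the $C^*$-algebra argument of \cite[Lemma 3.13]{VP}. The key input is the observation, already recorded in the preliminaries, that $M_n(\mathcal{A})$ is itself a pro-$C^*$-algebra. Consequently, an element $T \in M_n(\mathcal{A})$ is positive precisely when $T = S^* S$ for some $S \in M_n(\mathcal{A})$; this is exactly the definition of a positive element applied to $M_n(\mathcal{A})$ (and is reinforced by the functional-calculus characterization of positivity used earlier for $\mathcal{L}_{\mathcal{B}}(E)$). So the first step is simply to fix such an $S = (s_{ij})_{i,j=1}^n$ with $T = S^* S$.

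The second step is a direct expansion of the matrix product. Writing out the $(i,j)$ entry of $S^* S$ gives
\begin{equation*}
T_{ij} = (S^* S)_{ij} = \sum_{k=1}^n (S^*)_{ik}\, s_{kj} = \sum_{k=1}^n s_{ki}^*\, s_{kj}.
\end{equation*}
Interchanging the order of summation and the formation of the matrix, this reads
\begin{equation*}
T = \sum_{k=1}^n \big( s_{ki}^*\, s_{kj} \big)_{i,j=1}^n .
\end{equation*}
For each fixed $k \in \{1,\dots,n\}$ one sets $a_i := s_{ki}$ for $i = 1,\dots,n$, so that the $k$-th summand is exactly $(a_i^* a_j)_{i,j=1}^n$ with $a_1,\dots,a_n \in \mathcal{A}$. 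Hence $T$ is a sum of (at most) $n$ elements of the prescribed form, which is what the lemma asserts.

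There is essentially no deep obstacle here; the content is bookkeeping. The one point requiring care is the justification that positivity of $T$ in $M_n(\mathcal{A})$ yields a genuine factorization $T = S^* S$ inside $M_n(\mathcal{A})$. In the pro-$C^*$ setting this rests on $M_n(\mathcal{A})$ being a pro-$C^*$-algebra (so that the definition of positivity as $b^* b$ applies verbatim) and on the availability of continuous functional calculus to produce a square root when needed; both are standard and are quoted in the text. I would also remark, for completeness, that the converse inclusion is immediate---each matrix $(a_i^* a_j)_{i,j=1}^n$ equals $C^* C$, where $C$ is the matrix whose first row is $(a_1,\dots,a_n)$ and whose other entries vanish, hence is positive, and sums of positives are positive---though only the stated direction is needed.
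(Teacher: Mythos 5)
Your proof is correct and follows essentially the same route as the paper: both start from the factorization $T = S^*S$ given by the definition of positivity in the pro-$C^*$-algebra $M_n(\mathcal{A})$, and both decompose the product as a sum over the rows of $S$, the $k$-th summand being $(s_{ki}^*s_{kj})_{i,j=1}^n$ (the paper phrases this via row matrices $B_k$ with vanishing cross terms $B_i^*B_j$, you phrase it entrywise, but the resulting decomposition is identical).
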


\begin{proof}
    If $B \in M_n(\mathcal{A})$ be such that its $k^{\text{th}}$ row is $(a_1, a_2, \dots, a_n)$ and all other entries are zero, then, by definition, $B^*B = ({a_i}^*a_j)$ is positive. Now, let $P$ be a positive element in $ M_n(\mathcal{A}).$ Then $P=Q^*Q$ for some $Q \in M_n(\mathcal{A}).$
    For each $i \in \{1, 2, \dots, n\},$
    let $B_i$ denote the matrix with $i^{\text{th}}$ row of $Q$ as its $i^{\text{th}}$ row and zero elsewhere. Then $Q = B_1 + B_2 + \dots + B_n$. Observe that
    $$P= Q^*Q =  {B_1}^*B_1 + {B_2}^*B_2 + \dots + {B_n}^*B_n.$$
    Indeed ${B_i}^*B_j = 0,$ for all $i,j \in \{1, 2, \dots, n\}$ with $i\neq j$.
\end{proof}

\begin{remark}\label{positivityequi}
    Recall that if
    $\phi: \mathcal{A} \rightarrow \mathcal{B}$ is completely positive, then $\phi^{(n)}: M_n(\mathcal{A}) \rightarrow M_n(\mathcal{B})$ defined by $$\phi^{(n)}([a_{ij}]_{i,j=1}^n) = [\phi(a_{ij})]_{i,j=1}^n,$$ is positive in $M_n(\mathcal{B})$, for each $n \in \mathbb{N}.$ 
    Lemma \ref{equivdef} simplifies the verification of  the positivity of the matrix $[\phi(a_{ij})]_{i,j=1}^n$  to checking that $\left[\phi\left({a_i}^*a_j\right)\right]_{i,j=1}^n$ is positive for all $a_1, a_2, \dots, a_n \in \mathcal{A}.$


If $D$ is a two-sided Hilbert $\mathcal{B}$-module, then for each $n \in \mathbb{N}$, the equivalent condition for verifying the positivity of
    $\phi^{(n)}([a_{ij}]_{i,j=1}^n){I_{D^n}}$ in $M_n(\mathcal{D})$ is to check that $\left[\phi\left({a_i}^*a_j\right)\right]_{i,j=1}^n$ is positive in $M_n(\mathcal{B})$, for all $a_1, a_2, \dots, a_n \in \mathcal{A}.$ That is, for $a_i \in \mathcal{A}$ and $i = 1,\dots, n$, 
    $$\left\langle \begin{pmatrix}
         	d_1 \\ \vdots \\ d_n
         \end{pmatrix} , \left[\tau\left(\phi\left({a_i}^*a_j\right)\right)\right]_{i,j=1}^n  
    \begin{pmatrix}
         d_1 \\ \vdots \\ d_n
     \end{pmatrix}  \right\rangle, \\$$
     is positive  for each $n \in \mathbb{N}.$ Here $\tau: \mathcal{B} \rightarrow \mathcal{L_B}(D)$ is a map such that we can identify $b.d$ with $\tau(b)(d)$ for $b \in \mathcal{B}, d \in D$ (see Definition \ref{leftaction}).
\end{remark}
This equivalent condition for positivity will be frequently applied in the subsequent results. We now introduce a pre-order on the set $\mathcal{CP}(E, F)$.

\begin{definition}
    Let $\Phi, \Psi \in \mathcal{CP}(E, F).$ We define a relation $``\preceq"$ on $\mathcal{CP}(E, F)$ as follows: 
    $$\Psi \preceq	\Phi \text{ if } \phi - \psi \text{ is completely positive.}$$
\end{definition}

The following remark justifies that the above relation is a pre-order on $\mathcal{CP}(E, F).$

\begin{remark} The relation $``\preceq"$ defined above satisfies the following properties:
    \begin{enumerate}
        \item $\Phi \preceq \Phi$ for all $\Phi \in \mathcal{CP}(E,F);$
        \item for $\Phi_1, \Phi_2 \text{ and } \Phi_3 \in \mathcal{CP}(E, F),$ if $\Phi_1 \preceq \Phi_2$ and $\Phi_2 \preceq \Phi_3$ then $\Phi_1 \preceq \Phi_3;$
        \item for $\Phi \text{ and } \Psi \in \mathcal{CP}(E,F),$  we have $\Phi \preceq \Psi$ and $\Psi \preceq \Phi$ if and only if $\Phi \sim \Psi.$
 \end{enumerate}
\end{remark}

Next, we introduce the notion of commutants, inspired by \cite[Definition 4.1]{AL}.

\begin{definition}
Let $G$ be a Hilbert $\mathcal{A}$-module  and $F_1, F_2$ be Hilbert $\mathcal{B}$-modules.    Let $\pi: \mathcal{A} \rightarrow \mathcal{L_B}(F_1)$ be a unital continuous $^*-$morphism and $\Pi: G \rightarrow \mathcal{L_B}(F_1,F_2)$ be a $\pi-$map. We define the commutant of the set $\Pi(G)$ as the set
\[
\begin{aligned}
\Pi(G)' := \{T_1 \oplus T_2 \in \mathcal{L}_{\mathcal{B}}(F_1 \oplus F_2) :\ 
& \Pi(x)T_1 = T_2\Pi(x), \text{ and } \\
& T_1\Pi(x)^* = \Pi(x)^*T_2, \text{ for all } x \in G \}.
\end{aligned}
\]
Here, $(T_1\oplus T_2) (f_1 \oplus f_2) = T_1(f_1) \oplus T_2(f_2)$, for $f_1 \oplus f_2 \in F_1 \oplus F_2.$

Note that $$\pi(\mathcal{A})':= \{T \in \mathcal{L_B}(F_1): \pi(a)T = T\pi(a),\text{ for all } a \in \mathcal{A}\}.$$
\end{definition}

\begin{remark}\label{nondeg} We make the following observations based on the definition above.
    \begin{enumerate}
        \item $\Pi(G)'$ forms a pro-$C^*$-algebra. Indeed for  $T_1\oplus T_2, S_1\oplus S_2 \in \Pi(G)'$ and $\alpha \in \mathbb{C},$  we have 
        $(T_1 + S_1) \oplus (T_2 + S_2), \alpha T_1 \oplus \alpha T_2, T_1^* \oplus T_2^*$ and $T_1 S_1 \oplus T_2 S_2$, all belong to $\Pi(G)'.$
Moreover, $\Pi(G)'$ is closed in $\mathcal{L_{{\mathcal B}}}(F_1 \oplus F_2)$ with respect to the semi-norms on $\mathcal{L_{{\mathcal B}}}(F_1 \oplus F_2)$.
        The proof is similar to \cite[Lemma 4.3]{AL}.
        \item If $[\Pi(G)(F_1)] = F_2,$ (that is, $\Pi$ is non-degenerate) and if  $T_1 \oplus T_2 \in \Pi(G)'$ then $T_2$ is uniquely determined by $T_1.$
        \item Let $G$ be full. If $T_1 \oplus T_2 \in \Pi(G)'$, then $T_1 \in \pi(\mathcal{A})'.$ Indeed, for $x \in G, $ we see that 
        \begin{equation*}
            \begin{split}
                \pi(\langle x,x \rangle)T_1 = \Pi(x)^*\Pi(x)T_1 &=\Pi(x)^*T_2\Pi(x) \\
                &= T_1\Pi(x)^*\Pi(x) \\
                &= T_1\pi(\langle x,x \rangle).
            \end{split}
        \end{equation*}
    \end{enumerate}
\end{remark}

Let 
$$\mathcal{CP}(\mathcal{A},\mathcal{B}):= \{\phi: \mathcal{A} \rightarrow \mathcal{B}: \phi \text{ is continuous, completely positive}\}.$$

We know that whenever $\phi \in \mathcal{CP}(\mathcal{A},\mathcal{B}),$ there is a two-sided Hilbert $\mathcal{B}$-module $D$ and a Stinespring triple $(\pi_\phi, K_\phi, V_\phi)$ attached to it by Theorem \ref{BRmain}.

\begin{lemma}\label{contraction}
     Suppose $\phi, \psi: \mathcal{A} \rightarrow \mathcal{B}$ are two completely positive, continuous maps such that $\psi \leq \phi$. Let $\left(\pi_{\phi}, V_{\phi}, K_{\phi}\right)$ and $\left(\pi_{\psi}, V_{\psi}, K_{\psi}\right)$ be the Stinespring triples associated with $\phi$ and $\psi$ respectively. Then there exists a contraction $J_{\phi}(\psi): K_{\phi} \rightarrow K_{\psi}$ such that:
    \begin{enumerate}
        \item $J_{\phi}(\psi) V_{\phi} = V_{\psi}$;
        \item $J_{\phi}(\psi) \pi_{\phi}(a) = \pi_{\psi}(a) J_{\phi}(\psi)$, for all $a \in \mathcal{A}$.
    \end{enumerate}
\end{lemma}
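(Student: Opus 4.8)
The plan is to build $J_\phi(\psi)$ on a dense submodule of $K_\phi$ by the obvious intertwining formula and then to extend it by continuity, with contractivity forced by the complete positivity of $\phi-\psi$. Recall from \cite[Remark 3.12]{BR} that $K_\phi=[\pi_\phi(\mathcal{A})V_\phi D]$ and $K_\psi=[\pi_\psi(\mathcal{A})V_\psi D]$, so finite sums of the form $\sum_i \pi_\phi(a_i)V_\phi d_i$ (with $a_i\in\mathcal{A}$, $d_i\in D$) are dense in $K_\phi$, and likewise in $K_\psi$. On such elements I would set
\[
J_\phi(\psi)\Big(\sum_i \pi_\phi(a_i)V_\phi d_i\Big)=\sum_i \pi_\psi(a_i)V_\psi d_i .
\]

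The heart of the argument is a single inner-product estimate that yields well-definedness and contractivity at once. Using that $\pi_\psi$ is a unital $^*$-morphism, that $V_\psi$ is adjointable, and the Stinespring identity $V_\psi^*\pi_\psi(a)V_\psi=\psi(a)I_D$ of Theorem \ref{BRmain}, a direct expansion gives
\[
\Big\langle \sum_i \pi_\psi(a_i)V_\psi d_i,\ \sum_j \pi_\psi(a_j)V_\psi d_j\Big\rangle=\sum_{i,j}\big\langle d_i,\ \psi(a_i^*a_j)\,d_j\big\rangle,
\]
and the same computation holds with $\phi$ in place of $\psi$. Subtracting, the $\phi$-inner product dominates the $\psi$-inner product precisely when $\sum_{i,j}\langle d_i,(\phi-\psi)(a_i^*a_j)\,d_j\rangle\ge 0$ in $\mathcal{B}$. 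This is exactly the positivity guaranteed by Remark \ref{positivityequi} (via Lemma \ref{equivdef}) applied to the completely positive map $\phi-\psi$ together with the left action $\tau$ on $D$, so
\[
\Big\langle \sum_i \pi_\psi(a_i)V_\psi d_i,\ \sum_i \pi_\psi(a_i)V_\psi d_i\Big\rangle\ \le\ \Big\langle \sum_i \pi_\phi(a_i)V_\phi d_i,\ \sum_i \pi_\phi(a_i)V_\phi d_i\Big\rangle .
\]
Applying any $p\in S(\mathcal{B})$ and using that $C^*$-seminorms are order preserving on positive elements, I get $\big\|\sum_i \pi_\psi(a_i)V_\psi d_i\big\|_p\le \big\|\sum_i \pi_\phi(a_i)V_\phi d_i\big\|_p$ for every $p$. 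In particular, if the $\phi$-sum vanishes then so does the $\psi$-sum, making $J_\phi(\psi)$ well-defined, and the same estimate shows it is a contraction seminorm-by-seminorm.

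It then remains to extend $J_\phi(\psi)$ to all of $K_\phi$ by density and completeness, and to verify the two relations. Since $V_\phi$ and each $\pi_\phi(a)$ are $\mathcal{B}$-linear, $J_\phi(\psi)$ is a $\mathcal{B}$-module map on the dense submodule, and the extension is a $\mathcal{B}$-linear contraction. Property (1) follows from unitality of the representations: $J_\phi(\psi)V_\phi d=J_\phi(\psi)\big(\pi_\phi(1_\mathcal{A})V_\phi d\big)=\pi_\psi(1_\mathcal{A})V_\psi d=V_\psi d$. Property (2) follows from multiplicativity: on the dense submodule $J_\phi(\psi)\pi_\phi(a)\big(\pi_\phi(b)V_\phi d\big)=\pi_\psi(ab)V_\psi d=\pi_\psi(a)J_\phi(\psi)\big(\pi_\phi(b)V_\phi d\big)$, and since both sides are continuous the identity holds on all of $K_\phi$.

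The main obstacle is the inner-product domination in the middle step; everything else is bookkeeping. The crux is to recognize that the $\mathcal{B}$-valued positivity needed there is exactly the reformulated complete-positivity condition of Remark \ref{positivityequi} for $\phi-\psi$. One must also stay attentive to the pro-$C^*$ setting: \emph{contraction} is meant for each seminorm, and the passage from the operator inequality of $\mathcal{B}$-valued inner products to the scalar seminorm inequality relies on order-preservation of each $C^*$-seminorm.
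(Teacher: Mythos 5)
Your proposal is correct and follows essentially the same route as the paper: define $J_\phi(\psi)$ on the dense span $[\pi_\phi(\mathcal{A})V_\phi D]$ by the intertwining formula, obtain the inner-product domination from complete positivity of $\phi-\psi$ via the matrix-positivity reformulation (Remark \ref{positivityequi}), and extend by density to get a contraction. The only difference is that you spell out well-definedness and the verification of properties (1) and (2) (via unitality and multiplicativity of the representations), which the paper instead delegates to the analogous argument in \cite[Theorem 3.13]{BR}; this is a harmless and indeed slightly more self-contained presentation.
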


\begin{proof}
   Define a linear map $J_{\phi}(\psi): K_\phi \rightarrow K_\psi$ by 
    $$J_\phi(\psi)\left(\pi_\phi(a)V_\phi d\right) = \pi_\psi(a)V_\psi d,$$
    for all $a \in \mathcal{A}$ and $d \in D.$

    Given that $\phi - \psi$ is completely positive, we observe that, for $a_1,\dots a_n \in \mathcal{A}$ and $d_1, \dots, d_n \in D,$ we have 
\begin{equation*}
	\begin{split}
& \left\langle J_{\phi}(\psi)\left(\sum_{i=1}^n\pi_{\phi}(a_i)V_\phi d_i\right), J_{\phi}(\psi)  \left(\sum_{i=1}^n\pi_{\phi}(a_i)V_\phi d_i\right) \right\rangle \\ 
&=  \sum_{i,j=1}^n \left \langle \pi_\psi(a_i)V_\psi d_i, \pi_\psi(a_j)V_\psi d_j \right\rangle   \\
&= \sum_{i,j=1}^n \left \langle V_\psi d_i, \pi_\psi({a_i}^*a_j)V_\psi d_j \right\rangle   \\
&= \sum_{i,j=1}^n \left \langle  d_i, {V_\psi}^*\pi_\psi({a_i}^*a_j)V_\psi d_j \right\rangle   \\
&= \sum_{i,j=1}^n \left \langle  d_i, \psi({a_i}^*a_j) d_j \right\rangle   \\
&= \left\langle \begin{pmatrix}
         	d_1 \\ \vdots \\ d_n
         \end{pmatrix} , \left[\tau\left(\psi\left({a_i}^*a_j\right)\right)\right]_{i,j=1}^n  
    \begin{pmatrix}
         d_1 \\ \vdots \\ d_n
     \end{pmatrix}  \right\rangle \\
&\leq \left\langle \begin{pmatrix}
         	d_1 \\ \vdots \\ d_n
         \end{pmatrix} , \left[\tau\left(\phi\left({a_i}^*a_j\right)\right)\right]_{i,j=1}^n  
    \begin{pmatrix}
         d_1 \\ \vdots \\ d_n
     \end{pmatrix}  \right\rangle \\
&= \sum_{i,j=1}^n \left \langle  d_i, \phi({a_i}^*a_j) d_j \right\rangle   \\
&=  \left\langle \sum_{i=1}^n\pi_{\phi}(a_i)V_\phi d_i, \sum_{i=1}^n\pi_{\phi}(a_i)V_\phi d_i \right \rangle .
	\end{split}
\end{equation*}
Thus, $\|J_{\phi}(\psi)\| \leq 1.$ 
Since $[\pi_\phi(\mathcal{A})V_\phi D] = K_\phi,$ we can uniquely extend this operator to an operator from $K_\phi$ to $K_\psi$.

    The proof of 1. and 2. is analogous to that of \cite[Theorem 3.13]{BR}.

\end{proof}

Define an interval $[0,\phi]$ by
$$[0,\phi]:= \{\psi \in \mathcal{CP}(\mathcal{A},\mathcal{B}): \psi \leq \phi\}.$$
For each $T \in \pi_\phi(\mathcal{A})'$, define a linear map $\phi_T: \mathcal{A} \rightarrow \mathcal{B}$  given by
    $$\phi_T(a)I_D= {V_\phi}^*T\pi_\phi(a)V_\phi,$$
     for all $a \in \mathcal{A}.$
     
We now establish that, for $\phi, \psi \in \mathcal{CP}(\mathcal{A},\mathcal{B})$, the relation $\psi \leq \phi$ holds if and only if there exists a positive contraction $T \in \pi_\phi(\mathcal{A})'$ satisfying
\[
\psi(\cdot)I_D = V_\phi^* T \pi_\phi(\cdot) V_\phi.
\]
This result can be regarded as a Radon–Nikod\'ym-type theorem for completely positive maps between pro-$C^*$-algebras.

\begin{theorem}\label{phiT}
   The map $T \mapsto \phi_T$ is an affine order isomorphism from the set $\{T \in \pi_\phi(\mathcal{A})': 0 \leq T \leq I\}$ onto $[0,\phi].$
\end{theorem}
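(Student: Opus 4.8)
The plan is to reduce the entire statement to a single bidirectional criterion: for a self-adjoint $R \in \pi_\phi(\mathcal{A})'$, the map defined by $\phi_R(a)I_D = V_\phi^* R\pi_\phi(a)V_\phi$ is completely positive if and only if $R \geq 0$. To prove this I would invoke the positivity criterion of Remark \ref{positivityequi}: complete positivity of $\phi_R$ is equivalent to
$$\sum_{i,j=1}^n \left\langle d_i, \tau\big(\phi_R({a_i}^* a_j)\big) d_j \right\rangle \geq 0$$
for all $a_i \in \mathcal{A}$ and $d_i \in D$. Since $R$ commutes with $\pi_\phi(\mathcal{A})$ and $\pi_\phi$ is a $^*$-morphism, one rewrites $R\pi_\phi({a_i}^* a_j) = \pi_\phi(a_i)^* R\pi_\phi(a_j)$, so the displayed sum collapses to $\langle \eta, R\eta \rangle$ with $\eta = \sum_i \pi_\phi(a_i) V_\phi d_i$. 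Because $[\pi_\phi(\mathcal{A})V_\phi D] = K_\phi$ is dense, such vectors $\eta$ exhaust a dense subset of $K_\phi$; hence the sum is nonnegative for all choices precisely when $\langle \eta, R\eta \rangle \geq 0$ for every $\eta \in K_\phi$, that is, when $R \geq 0$ (reconciling $\langle R\eta,\eta\rangle$ with $\langle \eta, R\eta\rangle$ via self-adjointness of $R$ and of positive elements of $\mathcal{B}$).

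Granting this criterion, the remaining assertions follow quickly. For well-definedness: if $0 \leq T \leq I$, then $T \geq 0$ makes $\phi_T$ completely positive, while $I - T \geq 0$ makes $\phi - \phi_T = \phi_{I-T}$ completely positive (note $\phi_I = \phi$ by Theorem \ref{BRmain}), so $\phi_T \in [0,\phi]$; continuity of $\phi_T$ is inherited from continuity of $\pi_\phi$ and boundedness of $V_\phi$ and $T$. Affineness is immediate from linearity of $T \mapsto V_\phi^* T\pi_\phi(\cdot)V_\phi$. For the order isomorphism property I would apply the criterion to $R = T - S$ (self-adjoint since $S,T$ are positive): $\phi_S \leq \phi_T$ means $\phi_{T-S}$ is completely positive, which holds iff $T - S \geq 0$, i.e. $S \leq T$ — giving order preservation in both directions simultaneously. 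Injectivity is the special case $\phi_S = \phi_T$, which forces both $T - S \geq 0$ and $S - T \geq 0$, hence $S = T$.

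The substantive step is surjectivity, where I would use Lemma \ref{contraction}. Given $\psi \in [0,\phi]$, i.e. $\psi \leq \phi$, that lemma yields a contraction $J := J_\phi(\psi): K_\phi \to K_\psi$ with $J V_\phi = V_\psi$ and $J\pi_\phi(a) = \pi_\psi(a)J$ for all $a$. Set $T := J^*J$, so $0 \leq T \leq I$ since $J$ is a contraction. Taking adjoints in the intertwining relation (using that $\pi_\phi,\pi_\psi$ are $^*$-morphisms) gives $\pi_\phi(a)J^* = J^*\pi_\psi(a)$; combining this with $J\pi_\phi(a) = \pi_\psi(a)J$ shows that both $T\pi_\phi(a)$ and $\pi_\phi(a)T$ equal $J^*\pi_\psi(a)J$, so $T \in \pi_\phi(\mathcal{A})'$. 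Finally,
$$\phi_T(a)I_D = V_\phi^* J^* J\pi_\phi(a)V_\phi = (JV_\phi)^*\pi_\psi(a)(JV_\phi) = V_\psi^*\pi_\psi(a)V_\psi = \psi(a)I_D,$$
so $\phi_T = \psi$ and the map is onto $[0,\phi]$.

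I expect the main obstacle to be the careful handling of the module-valued positivity in the core criterion: justifying the passage from the finite double sum to the single quadratic form $\langle \eta, R\eta\rangle$, the use of the density of $[\pi_\phi(\mathcal{A})V_\phi D]$ in $K_\phi$, and the reconciliation of the two positivity conventions $\langle Rx,x\rangle \geq 0$ versus $\langle x, Rx\rangle \geq 0$ in the Hilbert-module setting. The verification that $T = J^*J$ lands in $\pi_\phi(\mathcal{A})'$ is the other delicate point, but once the intertwining relations of Lemma \ref{contraction} are in hand it reduces to a short adjoint computation.
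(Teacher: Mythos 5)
Your proposal is correct and follows essentially the same route as the paper's proof: every positivity verification reduces to the criterion of Remark \ref{positivityequi} collapsed through the commutant relation, and surjectivity is handled exactly as in the paper via Lemma \ref{contraction} with $T = J_\phi(\psi)^* J_\phi(\psi)$. Your single bidirectional criterion ($\phi_R$ completely positive iff $R \geq 0$ for self-adjoint $R \in \pi_\phi(\mathcal{A})'$) is a tidy repackaging of what the paper does in three separate computations (complete positivity via $T^{1/2}$, injectivity via $\phi_{T^2}$, and order reflection), so the substance coincides.
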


\begin{proof}
     Let $T  \in \pi_\phi(\mathcal{A})'$ such that $0\leq T\leq I$. Clearly the map $\phi_T$ is linear for each $T.$ Morever, for $S  \in \pi_\phi(\mathcal{A})'$ such that $0\leq S\leq I$, and for $\lambda \in [0,1]$, we have
     \begin{equation*}
         \begin{split}
             \phi_{\lambda T + (1-\lambda)S}(\cdot)I_D &= {V_\phi}^*\left(\lambda T + (1-\lambda)S\right)\pi_\phi(\cdot)V_\phi \\
             &= \lambda{V_\phi}^* T\pi_\phi(\cdot)V_\phi + (1-\lambda){V_\phi}^*S\pi_\phi(\cdot)V_\phi \\
             &= \left(\lambda \phi_{T}(\cdot) + (1-\lambda)\phi_{S}(\cdot)\right)I_D.
         \end{split}
     \end{equation*}
     This shows that the map $T \mapsto \phi_T$ is affine.
     
    Next, we observe that $\phi_T$ is completely positive. Indeed, since $\pi_\phi$ is completely positive, for $a_1,\dots a_n \in \mathcal{A}$ and $d_1, \dots, d_n \in D,$  we have
 \begin{equation*}
     \begin{split}
         \left\langle \begin{pmatrix}
         	d_1 \\ \vdots \\ d_n
         \end{pmatrix} , \left[\tau\left(\phi_T\left({a_i}^*a_j\right)\right)\right]_{i,j=1}^n  
    \begin{pmatrix}
         d_1 \\ \vdots \\ d_n
     \end{pmatrix}  \right\rangle &= \sum_{i,j=1}^n \left \langle d_i, \tau\left(\phi_T\left({a_i}^*a_j\right)\right)d_j \right \rangle \\
     &= \sum_{i,j=1}^n \left \langle d_i, {V_\phi}^*T\pi_\phi\left({a_i}^*a_j\right)V_\phi d_j \right \rangle \\
     &= \sum_{i,j=1}^n \left \langle {T}^\frac{1}{2}V_\phi d_i, T^\frac{1}{2}\pi_\phi\left({a_i}^*a_j\right)V_\phi d_j \right \rangle \\
     &= \sum_{i,j=1}^n \left \langle {T}^\frac{1}{2}V_\phi d_i, \pi_\phi\left({a_i}^*a_j\right)T^\frac{1}{2}V_\phi d_j \right \rangle, 
     \end{split}
 \end{equation*}
 is positive  for each $n \in \mathbb{N}.$ Thus, by Remark \ref{positivityequi}, ${\phi_T}^n([a_{ij}]_{i,j=1}^n)$ is positive in $M_n(\mathcal{B})$ for each $n \in \mathbb{N}$.

Replacing $T$ by $I-T$ in the above calculations, we in fact get $\phi_T \in [0,\phi].$ 
In the same manner, for $T_1,T_2  \in \pi_\phi(\mathcal{A})'$ such that $0\leq T_1 \leq T_2 \leq I$, we get $\phi_{T_1} \leq \phi_{T_2} \leq \phi.$

Next we show that the map $T \mapsto \phi_T$ is injective. If $\phi_T = 0$, then by order relation $\phi_{T^2}=0$. For $a \in \mathcal{A}$ and $d\in D$, we have
\begin{equation*}
    \begin{split}
        \left \langle T\pi_\phi(a)V_\phi d, T\pi_\phi(a)V_\phi d \right \rangle &= \left \langle  d, {V_\phi}^*\pi_\phi({a}^*)T^2\pi_\phi(a)V_\phi d \right \rangle \\
        &=  \left \langle  d, {V_\phi}^*T^2\pi_\phi({a}^*a)V_\phi d \right \rangle \\
        &= \left \langle  d, \phi_{T^2}({a}^*a) d \right \rangle \\
        &= 0.
    \end{split}
\end{equation*}
Since $[\pi_\phi(\mathcal{A})V_\phi(D)] = K_\phi$, we have $T=0$. Hence the map $T \mapsto \phi_T$ is injective.

Let $T \in \pi_\phi(\mathcal{A})'$. If $\phi_T \geq 0,$ we show that $T \geq 0.$ If $k_\phi \in K_\phi,$ then $k_\phi = \underset{i=1}{\overset{n}{\sum}}\pi_\phi(a_i)V_\phi d_i$, where $a_i \in \mathcal{A}$ and $ d_i \in D$ for all $i = 1,\dots, n$. Observe that,
\begin{equation*}
    \begin{split}
        \langle  k_\phi, T k_\phi \rangle &= \left \langle  \sum_{i=1}^n\pi_\phi(a_i)V_\phi d_i ,T\left( \sum_{i=1}^n\pi_\phi(a_i)V_\phi d_i\right)  \right \rangle \\
        &= \sum_{i,j = 1}^n \left \langle d_i, {V_\phi}^*T\pi_\phi({a_i}^*a_j)V_\phi d_j \right \rangle \\
        &= \sum_{i,j = 1}^n \left \langle d_i, \phi_T({a_i}^*a_j) d_j \right \rangle \\
        &\geq 0.
    \end{split}
\end{equation*}
In the similar way, for  $T_1,T_2  \in \pi_\phi(\mathcal{A})'$ such that $0\leq \phi_{T_1} \leq \phi_{T_2} \leq \phi,$ we get $0 \leq T_1 \leq T_2 \leq I.$ 

Let $\psi \in [0,\phi]$, and let $\left(\pi_\psi, V_\psi, K_\psi \right)$ be the Stinespring triple associated with $\psi$. Then by Lemma \ref{contraction} there exists a contraction $J_{\phi}(\psi): K_{\phi} \rightarrow K_{\psi}$ such that
    $J_{\phi}(\psi) V_{\phi} = V_{\psi}$ and
       $J_{\phi}(\psi) \pi_{\phi}(a) = \pi_{\psi}(a) J_{\phi}(\psi)$, for all $a \in \mathcal{A}$.
   Define $T:=J_{\phi}(\psi)^*J_{\phi}(\psi)$. Clearly $0 \leq T \leq I$ and $T \in \pi_\phi(\mathcal{A})'.$ Indeed, for $a \in \mathcal{A}$, we have  $$J_{\phi}(\psi)^*J_{\phi}(\psi)\pi_\phi(a^*) = J_{\phi}(\psi)^*\pi_{\psi}(a^*) J_{\phi}(\psi) = \pi_{\phi}(a^*)J_{\phi}(\psi)^*J_{\phi}(\psi).$$ Finally, for $a \in \mathcal{A}, d_1,d_2 \in D$, observe that
   \begin{equation*}
       \begin{split}
           \langle d_1, \phi_T(a)d_2 \rangle &= \langle d_1, {V_\phi}^*J_{\phi}(\psi)^*J_{\phi}(\psi) \pi_\phi(a) V_\phi d_2 \rangle \\
           &= \langle J_{\phi}(\psi){V_\phi}d_1, J_{\phi}(\psi) \pi_\phi(a) V_\phi d_2 \rangle \\
           &= \langle J_{\phi}(\psi){V_\phi}d_1, \pi_{\psi}(a) J_{\phi}(\psi)  V_\phi d_2  \rangle \\
           &= \langle V_\psi d_1, \pi_{\psi}(a) V_\psi d_2   \rangle \\
           &= \langle  d_1, \psi(a)d_2 \rangle.
       \end{split}
   \end{equation*}
This shows that the map is surjective, thereby establishing the correspondence.
\end{proof}

Next, for each element in the commutant, we associate a completely positive map, as shown below.

\begin{lemma}\label{rep}
    Let $\Phi \in \mathcal{CP}(E, F)$ and $(\pi_\Phi, K_\phi, K_\Phi, V_\phi, W_\Phi)$ be the Stinespring construction associated with $\Phi$. Let $T \oplus S \in {\pi_\Phi(E)}' $ be a positive element. Then the map $\Phi_{T \oplus S}: E \rightarrow \mathcal{L_B}(K_\phi, K_\Phi)$ defined by 
    \begin{equation*}
    \Phi_{T \oplus S}(x) = {W_\Phi}^*\sqrt{S}\pi_\Phi(x)\sqrt{T}V_\phi (1_\mathcal{B} \otimes \xi),
    \end{equation*}
    is completely positive.
\end{lemma}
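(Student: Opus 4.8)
The goal is to produce a completely positive scalar map $\phi'$ on $\mathcal{A}$ and to exhibit $\Phi_{T\oplus S}$ as a $\phi'$-map, since by the definition of a completely positive map between Hilbert modules this is exactly what is required. The natural candidate is $\phi':=\phi_{T^{2}}$, the map $\phi_{T^{2}}(a)I_{D}=V_\phi^{*}T^{2}\pi_\phi(a)V_\phi$ from Theorem~\ref{phiT}, whose complete positivity is already at hand.

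First I would unpack the hypothesis $T\oplus S\geq 0$. Positivity of the diagonal operator $T\oplus S$ on $K_\phi\oplus K_\Phi$ means $T\geq 0$ and $S\geq 0$, so $\sqrt{T}$ and $\sqrt{S}$ are defined; moreover, since $\pi_\Phi(E)'$ is a pro-$C^{*}$-algebra (Remark~\ref{nondeg}) containing $T\oplus S$, the square root $\sqrt{T}\oplus\sqrt{S}=\sqrt{T\oplus S}$ again lies in $\pi_\Phi(E)'$. Reading off the defining relations of the commutant yields the intertwiner $\sqrt{S}\,\pi_\Phi(x)=\pi_\Phi(x)\sqrt{T}$ for all $x\in E$, and substituting this into the defining formula collapses it to
\[
\Phi_{T\oplus S}(x)=W_\Phi^{*}\pi_\Phi(x)\,T\,V_\phi(1_\mathcal{B}\otimes\xi).
\]
Because $E$ is full, Remark~\ref{nondeg}(3) gives $T\in\pi_\phi(\mathcal{A})'$, hence $T^{2}\in\pi_\phi(\mathcal{A})'$ and $T^{2}\geq 0$.

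Next I would evaluate the inner product of two values. The computation runs exactly along the lines that show $\langle\Phi(x),\Phi(y)\rangle=\phi(\langle x,y\rangle)$ for the original $\phi$-map $\Phi$: move $W_\Phi^{*}$ across (using $W_\Phi W_\Phi^{*}=I_{K_\Phi}$ on $K_\Phi=[\pi_\Phi(X)V_\phi D]$), use that $\pi_\Phi$ is a $\pi_\phi$-morphism to replace $\pi_\Phi(x)^{*}\pi_\Phi(y)$ by $\pi_\phi(\langle x,y\rangle)$, and commute the central factor $T$ past $\pi_\phi$ via $T\in\pi_\phi(\mathcal{A})'$. This leaves $V_\phi^{*}T^{2}\pi_\phi(\langle x,y\rangle)V_\phi$ paired against $1_\mathcal{B}\otimes\xi$, and the very reduction that produced $\phi(\langle x,y\rangle)$ for $\Phi$ now produces $\phi_{T^{2}}(\langle x,y\rangle)$. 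Thus $\Phi_{T\oplus S}$ is a $\phi_{T^{2}}$-map.

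Finally, since $T^{2}\geq 0$ lies in $\pi_\phi(\mathcal{A})'$, the map $\phi_{T^{2}}$ is completely positive by the complete-positivity argument in the proof of Theorem~\ref{phiT}, which uses only positivity of the commutant element and not the contraction bound $T^{2}\leq I$. Being a $\phi_{T^{2}}$-map with $\phi_{T^{2}}$ completely positive, $\Phi_{T\oplus S}$ is completely positive. The step I expect to require the most care is the passage of $W_\Phi^{*}$ through the inner product: one must extract from the construction of Theorem~\ref{BRmain} (and \cite{BR}) the precise fact that $W_\Phi W_\Phi^{*}$ is the identity on the generating submodule $[\pi_\Phi(X)V_\phi D]=K_\Phi$, since this is exactly the ingredient used implicitly for $\Phi$ itself. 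Correctly setting up the square-root intertwiners in the commutant is the other point demanding attention.
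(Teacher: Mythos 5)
Your proposal is correct and takes essentially the same route as the paper: both exhibit $\Phi_{T\oplus S}$ as a $\phi_{T^2}$-map, using the commutant relations (extended to the square roots $\sqrt{T}\oplus\sqrt{S}\in\pi_\Phi(E)'$), the identity $W_\Phi W_\Phi^*=I_{K_\Phi}$, the $\pi_\phi$-morphism property $\pi_\Phi(x)^*\pi_\Phi(y)=\pi_\phi(\langle x,y\rangle)$, and the fullness of $E$ to place $T$ in $\pi_\phi(\mathcal{A})'$, and then conclude from the complete-positivity argument for $\phi_{T^2}$ in the proof of Theorem \ref{phiT}. Your only departures are organizational --- collapsing the defining formula to $W_\Phi^*\pi_\Phi(x)TV_\phi(1_\mathcal{B}\otimes\xi)$ before computing, and making explicit the square-root intertwining and coisometry facts that the paper uses implicitly inside its displayed inner-product calculation.
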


\begin{proof}

Let $T \oplus S \in {\pi_\Phi(E)}'$. For $x, y \in E$, we have 

\begin{equation}\label{-1}
    \begin{split}
         \left\langle \Phi_{T \oplus S}(x), &\Phi_{T \oplus S}(y) \right\rangle \\
        &=  \left\langle {W_\Phi}^*\sqrt{S}\pi_\Phi(x)\sqrt{T}V_\phi (1_\mathcal{B} \otimes \xi), {W_\Phi}^*\sqrt{S}\pi_\Phi(y)\sqrt{T}V_\phi (1_\mathcal{B} \otimes \xi) \right\rangle \\
        &= \left\langle \sqrt{S}\pi_\Phi(x)\sqrt{T}V_\phi (1_\mathcal{B} \otimes \xi), \sqrt{S}\pi_\Phi(y)\sqrt{T}V_\phi (1_\mathcal{B} \otimes \xi) \right\rangle \\
        &= \left\langle \sqrt{T}{\pi_\Phi(y)}^*S\pi_\Phi(x)\sqrt{T}V_\phi (1_\mathcal{B} \otimes \xi), V_\phi (1_\mathcal{B} \otimes \xi) \right\rangle \\
        &= \left\langle \sqrt{T}{\pi_\Phi(y)}^*S^{\frac{3}{2}}\pi_\Phi(x)V_\phi (1_\mathcal{B} \otimes \xi), V_\phi (1_\mathcal{B} \otimes \xi)\right\rangle \\
        &=\left\langle T^2{\pi_\Phi(y)}^*\pi_\Phi(x)V_\phi (1_\mathcal{B} \otimes \xi), V_\phi (1_\mathcal{B} \otimes \xi) \right\rangle \\
        &= \left\langle T^2\pi_\phi(\langle y,x \rangle)V_\phi (1_\mathcal{B} \otimes \xi), V_\phi (1_\mathcal{B} \otimes \xi)\right\rangle \\
        &= \left\langle \phi_{T^2}(\langle y,x \rangle)I_D(1_\mathcal{B} \otimes \xi), I_D(1_\mathcal{B} \otimes \xi) \right\rangle \\
        &= \phi_{T^2}(\langle y,x \rangle) \left\langle I_D(1_\mathcal{B} \otimes \xi), I_D(1_\mathcal{B} \otimes \xi) \right\rangle  \\
&= \phi_{T^2}(\langle y,x \rangle) \left\langle  \xi, {1_\mathcal{B}}^*1_\mathcal{B}\xi \right \rangle \\
&= \phi_{T^2}(\langle y,x \rangle) \left\langle  \xi, \xi \right\rangle \\
        &= \phi_{T^2}(\langle x,y \rangle).
    \end{split}
\end{equation}
Indeed, by \cite[Lemma 1]{KS}, $\xi := V_\phi(1_\mathcal{B})$. Observe that,  $\langle V_\phi(1_\mathcal{B}) , V_\phi(1_\mathcal{B}) \rangle = \langle 1_\mathcal{B} \otimes 1_\mathcal{B}, 1_\mathcal{B} \otimes 1_\mathcal{B} \rangle$ by \cite[Theorem 4.6]{MJ0}.

Therefore, $\Phi_{T \oplus S}$ is a $\phi_{T^2}$-map. Since $\phi_{T^2}$ is completely positive, as shown in the proof of Theorem \ref{phiT}, the proof is complete.
\end{proof}

Here, we say that $\phi_{T^2}$ is the completely positive map associated with $\Phi_{T \oplus S}$.

\begin{theorem}\label{main}
    Let $\Psi, \Phi \in \mathcal{CP}(E, F).$ If $\Psi \preceq \Phi$ then there exists a unique positive element $\Delta_\Phi(\Psi) \in \pi_\Phi(E)'$ such that $\Psi \sim \Phi_{\sqrt{\Delta_\Phi(\Psi)}}.$
\end{theorem}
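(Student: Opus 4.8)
The plan is to reduce the assertion to the pro-$C^*$-algebra Radon--Nikod\'ym theorem already proved (Theorem \ref{phiT}) and then to \emph{lift} the contraction it produces in $\pi_\phi(\mathcal{A})'$ to a genuine element of the larger commutant $\pi_\Phi(E)'$. First I would unwind the meaning of $\Psi \sim \Phi_{\sqrt{\Delta_\Phi(\Psi)}}$. Writing the candidate as $\Delta_\Phi(\Psi) = T \oplus S$ with $T \oplus S \geq 0$ in $\pi_\Phi(E)'$, we have $\sqrt{\Delta_\Phi(\Psi)} = \sqrt{T}\oplus\sqrt{S}$, and Lemma \ref{rep} (applied to $\sqrt{T}\oplus\sqrt{S}$, whose associated map is $\phi_{(\sqrt{T})^2}=\phi_T$) gives $\langle \Phi_{\sqrt{\Delta_\Phi(\Psi)}}(x), \Phi_{\sqrt{\Delta_\Phi(\Psi)}}(x)\rangle = \phi_T(\langle x,x\rangle)$. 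Since $\Psi$ is a $\psi$-map, $\langle \Psi(x),\Psi(x)\rangle = \psi(\langle x,x\rangle)$. Hence the existence half of the theorem reduces to producing a positive $T\oplus S \in \pi_\Phi(E)'$ whose first coordinate satisfies $\phi_T = \psi$, while for uniqueness I will read the equivalence backwards, using fullness of $E$ and polarisation to recover $\phi_T = \psi$ from the equality of the two $\mathcal{B}$-valued quadratic forms.

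Obtaining $T$ is immediate: the hypothesis $\Psi \preceq \Phi$ says $\phi - \psi$ is completely positive, so $\psi \leq \phi$, and as $\psi$ is completely positive we have $\psi \in [0,\phi]$. Theorem \ref{phiT} then yields a \emph{unique} positive contraction $T \in \pi_\phi(\mathcal{A})'$ with $\phi_T = \psi$; this is the designated first coordinate.

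The heart of the proof, and the step I expect to be the main obstacle, is building the second coordinate $S \in \mathcal{L}_\mathcal{B}(K_\Phi)$. Using $\pi_\Phi(x)\pi_\phi(a)=\pi_\Phi(xa)$ and $K_\phi = [\pi_\phi(\mathcal{A})V_\phi D]$ one has $K_\Phi = [\pi_\Phi(E)V_\phi D] = [\pi_\Phi(E)K_\phi]$, so I would define $S$ on this dense submodule by $S\big(\sum_i \pi_\Phi(x_i)k_i\big) := \sum_i \pi_\Phi(x_i)T k_i$. Well-definedness and contractivity are the delicate point. With $\mathbf{k}=(k_i)_i$, $M = \big[\pi_\phi(\langle x_i,x_j\rangle)\big]_{i,j}$ and $\tilde T = \operatorname{diag}(T,\dots,T)$, the morphism identity $\pi_\Phi(x)^*\pi_\Phi(y)=\pi_\phi(\langle x,y\rangle)$ gives $\langle S\eta,S\eta\rangle = \langle \mathbf{k}, M\tilde T^2\mathbf{k}\rangle$ and $\langle \eta,\eta\rangle=\langle\mathbf{k},M\mathbf{k}\rangle$ for $\eta=\sum_i\pi_\Phi(x_i)k_i$. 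Since $T$ commutes with every $\pi_\phi(a)$, the amplification $\tilde T$ commutes with $M$; moreover $M \geq 0$ because a $^*$-homomorphism is completely positive and $[\langle x_i,x_j\rangle]$ is a positive Gram matrix, and $0 \leq \tilde T^2 \leq I$. Therefore $M\tilde T^2 = M^{1/2}\tilde T^2 M^{1/2} \leq M$, which yields $\langle S\eta,S\eta\rangle \leq \langle\eta,\eta\rangle$ in $\mathcal{B}$ and hence $\|S\eta\|_p \leq \|\eta\|_p$ for every $p \in S(\mathcal{B})$; thus $S$ is well defined and extends to a contraction on $K_\Phi$. The same bookkeeping with $\tilde T$ in place of $\tilde T^2$ gives $\langle \eta, S\eta\rangle = \langle \mathbf{k}, M\tilde T\mathbf{k}\rangle = \langle \tilde T^{1/2}\mathbf{k}, M\tilde T^{1/2}\mathbf{k}\rangle \geq 0$, so $S \geq 0$, and a short computation shows $\langle S\pi_\Phi(x)k,\pi_\Phi(y)l\rangle = \langle \pi_\Phi(x)k, S\pi_\Phi(y)l\rangle$, so $S$ is adjointable with $S=S^*$. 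By construction $S\pi_\Phi(x)=\pi_\Phi(x)T$, and taking adjoints (with $S=S^*$, $T=T^*$) gives $\pi_\Phi(x)^*S = T\pi_\Phi(x)^*$; thus $T\oplus S \in \pi_\Phi(E)'$ is positive. Setting $\Delta_\Phi(\Psi):=T\oplus S$, the reduction of the first paragraph gives $\Psi \sim \Phi_{\sqrt{\Delta_\Phi(\Psi)}}$.

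Finally, for uniqueness, if $T'\oplus S' \geq 0$ in $\pi_\Phi(E)'$ also satisfies $\Psi \sim \Phi_{\sqrt{T'\oplus S'}}$, then the backward reading forces $\phi_{T'}=\psi=\phi_T$. By Remark \ref{nondeg}(3) (here $E$ is full) we have $T' \in \pi_\phi(\mathcal{A})'$ with $T'\geq 0$, and the order-reflecting part of Theorem \ref{phiT} applied to $0\le\phi_{T'}\le\phi_I=\phi$ forces $0\le T'\le I$; injectivity of $T\mapsto\phi_T$ on $[0,I]\cap\pi_\phi(\mathcal{A})'$ then gives $T'=T$. Since $[\pi_\Phi(E)K_\phi]=K_\Phi$ makes $\pi_\Phi$ non-degenerate, Remark \ref{nondeg}(2) says the second coordinate is determined by the first, so $S'=S$ and $\Delta_\Phi(\Psi)$ is unique.
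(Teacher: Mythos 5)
Your proposal is correct, and it reaches the same Radon--Nikod\'ym derivative as the paper, but by a genuinely different construction of the second component. The paper also starts from $\psi \le \phi$ and the algebra-level result (Lemma \ref{contraction} / Theorem \ref{phiT}), but it then builds a \emph{second intertwiner} $I_\Phi(\Psi): K_\Phi \to K_\Psi$ into the Stinespring dilation of $\Psi$, defined by $I_\Phi(\Psi)\bigl(\sum_i \pi_\Phi(x_i)V_\phi d_i\bigr) = \sum_i \pi_\Psi(x_i)V_\psi d_i$, proves it is a contraction using complete positivity of $\phi - \psi$ (exactly as in Lemma \ref{contraction}), and sets $\Delta_{1\Phi}(\Psi) = J_\phi(\psi)^*J_\phi(\psi)$, $\Delta_{2\Phi}(\Psi) = I_\Phi(\Psi)^*I_\Phi(\Psi)$, deriving the commutant relations from the intertwining identities $I_\Phi(\Psi)\pi_\Phi(x) = \pi_\Psi(x)J_\phi(\psi)$ and $\pi_\Psi(x)^*I_\Phi(\Psi) = J_\phi(\psi)\pi_\Phi(x)^*$. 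You instead never touch the module-level dilation $(\pi_\Psi, K_\psi, K_\Psi, V_\psi, W_\Psi)$ of $\Psi$: you take the unique $T \in \pi_\phi(\mathcal{A})'$ with $\phi_T = \psi$ from Theorem \ref{phiT} and lift it directly to $S$ on $K_\Phi$ via $S\bigl(\sum_i \pi_\Phi(x_i)k_i\bigr) := \sum_i \pi_\Phi(x_i)Tk_i$, with well-definedness, contractivity and positivity coming from the commuting-operator inequalities $M\tilde T^2 \le M$ and $M\tilde T \ge 0$; these rest on Gram-matrix positivity and commuting functional calculus, which is the same level of input the paper itself uses (Remark \ref{positivityequi}, and complete positivity applied to $[\langle x_i,x_j\rangle]$). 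The two constructions produce the same operator, since both satisfy $S\pi_\Phi(x) = \pi_\Phi(x)T$ on the dense submodule $[\pi_\Phi(E)K_\phi] = K_\Phi$ and the second component is determined by the first under non-degeneracy (Remark \ref{nondeg}(2)). What each buys: the paper's route exhibits $\Delta_\Phi(\Psi)$ as the square of an intertwiner, in direct parallel with Arveson's classical argument, and that intertwiner structure is reused later (the kernel/range analysis in Theorem \ref{unieq}); your route is more economical and self-contained, reduces both existence and uniqueness to the single equation $\phi_T = \psi$ via fullness and polarization, and makes uniqueness essentially automatic because $S$ is manifestly the canonical lift of $T$. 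Your uniqueness argument is also slightly more careful than the paper's in one respect: you justify why the competing first component $T'$ lies in $\pi_\phi(\mathcal{A})'$ (Remark \ref{nondeg}(3), using fullness) and why it is a positive contraction (the order-reflecting part of Theorem \ref{phiT}) before invoking injectivity of $T \mapsto \phi_T$, steps the paper passes over silently.
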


\begin{proof}

 Let $\Psi, \Phi \in \mathcal{CP}(E, F)$ such that $\Psi \preceq \Phi$. Since $\psi \leq \phi,$  by Lemma \ref{contraction} and Theorem \ref{phiT}, there exists a contraction $J_{\phi}(\psi): K_{\phi} \rightarrow K_{\psi}$ such that

\begin{equation}\label{0}
    \phi_{{J_{\phi}(\psi)}^*J_{\phi}(\psi)}(a) = \psi(a),
\end{equation}
for all $a \in \mathcal{A}.$   

Next, define $I_\Phi(\Psi): K_\Phi \rightarrow K_\Psi $ by
\begin{equation*}
I_\Phi(\Psi)\left(\sum_{i=1}^n\pi_{\Phi}(x_i)V_\phi d_i\right) = \sum_{i=1}^{n} \pi_{\Psi}(x_i)V_\psi d_i,
\end{equation*}
for all $x_1,\dots x_n \in E$ and $d_1, \dots, d_n \in D, n \geq 1.$ 
Observe that, for $x_1,\dots x_n \in E$ and $d_1, \dots, d_n \in D,$ we have

\begin{equation*}
	\begin{split}
	& \left\langle I_\Phi(\Psi)\left(\sum_{i=1}^n\pi_{\Phi}(x_i)V_\phi d_i\right), I_\Phi(\Psi)  \left(\sum_{i=1}^n\pi_{\Phi}(x_i)V_\phi d_i\right) \right\rangle \\
    &=  \sum_{i,j=1}^n \left \langle \pi_{\Psi}(x_i)V_\psi d_i, \pi_{\Psi}(x_j)V_\psi d_j \right\rangle   \\
		&= \sum_{i,j=1}^n \left \langle V_\psi d_i, \pi_\Psi(x_i)^*\pi_{\Psi}(x_j)V_\psi d_j \right\rangle   \\
		&= \sum_{i,j=1}^n \left \langle  d_i, {V_\psi}^*\langle \pi_\Psi(x_i), \pi_{\Psi}(x_j) \rangle V_\psi d_j \right\rangle   \\
		&= \sum_{i,j=1}^n \left \langle  d_i, {V_\psi}^* \pi_\psi(\langle x_i, x_j \rangle) V_\psi d_j \right\rangle   \\
&= \left\langle \begin{pmatrix}
         	d_1 \\ \vdots \\ d_n
         \end{pmatrix} , \left[\tau\left(\psi\left(\langle x_i, x_j \rangle\right)\right)\right]_{i,j=1}^n  
    \begin{pmatrix}
         d_1 \\ \vdots \\ d_n
     \end{pmatrix}  \right\rangle \\
&\leq \left\langle \begin{pmatrix}
         	d_1 \\ \vdots \\ d_n
         \end{pmatrix} , \left[\tau\left(\phi\left(\langle x_i, x_j \rangle\right)\right)\right]_{i,j=1}^n  
    \begin{pmatrix}
         d_1 \\ \vdots \\ d_n
     \end{pmatrix}  \right\rangle \\
		&= \sum_{i,j=1}^n \left \langle  d_i, {V_\phi}^* \pi_\phi(\langle x_i, x_j \rangle) V_\phi d_j \right\rangle   \\
		&=  \left\langle \sum_{i=1}^n\pi_\Phi(x_i)V_\phi d_i, \sum_{i=1}^n\pi_\Phi(x_i)V_\phi d_i \right\rangle .	
	\end{split}
\end{equation*}
Thus, $\|I_\Phi(\Psi)\| \leq 1.$ Since $[\pi_\Phi(E)V_\phi D] = K_\Phi,$ we can uniquely extend this operator to an operator from $K_\Phi$ to $K_\Psi$. 

For $x \in E, a \in \mathcal{A}$ and $d \in D,$ 
\begin{equation*}
    \begin{split}
        I_\Phi(\Psi)\pi_\Phi(x)(\pi_\phi(a)V_\phi d) &= 
        I_\Phi(\Psi)\pi_\Phi(xa)V_\phi d) \\
        &= \pi_\Psi(xa)V_\psi d \\
        &= \pi_\Psi(x)\pi_\psi(a)V_\psi d \\
        &= \pi_\Psi(x)J_{\phi}(\psi)\left(\pi_\phi(a)V_\phi d\right).
    \end{split}
\end{equation*}
Since $[\pi_\phi(a)V_\phi d] = K_\phi,$ we have 
\begin{equation}\label{1}
I_\Phi(\Psi)\pi_\Phi(x) = \pi_\Psi(x)J_{\phi}(\psi), \text{ for all } x \in E. 
\end{equation}
Similarly, we have 
\begin{equation}\label{2}
{\pi_\Psi(x)}^*I_\Phi(\Psi) = J_{\phi}(\psi){\pi_\Psi(x)}^*, \text{ for all } x \in E. 
\end{equation}
Indeed, since $[\pi_\Phi(x)V_\phi d] = K_\Phi,$ for $x, y \in E,$ and $d \in D,$ observe
\begin{equation*}
    \begin{split}
        {\pi_\Psi(x)}^*I_\Phi(\Psi)(\pi_\Phi(y)V_\phi d) &= 
        {\pi_\Psi(x)}^*(\pi_\Psi(y)V_\psi d) \\
        &= \pi_\psi(\langle x,y\rangle)V_\psi d \\
        &= J_{\phi}(\psi)\left(\pi_\phi(\langle x,y\rangle)V_\phi d\right) \\
        &=J_{\phi}(\psi){\pi_\Phi(x)}^*(\pi_\Phi(y)V_\psi d).
    \end{split}
\end{equation*}

Define $\Delta_\Phi(\Psi) := \Delta_{1 \Phi}(\Psi) \oplus \Delta_{2 \Phi}(\Psi)$, where 
$\Delta_{1 \Phi}(\Psi):= {J_{\phi}(\psi)}^*J_{\phi}(\psi)$ and
$\Delta_{2 \Phi}(\Psi):= {I_\Phi(\Psi)}^*I_\Phi(\Psi).$

Using equations \ref{1} and \ref{2}, for $x \in E$,  we have
\begin{equation*}
    \begin{split}
        \Delta_{2 \Phi}(\Psi)\pi_\Phi(x) = {I_\Phi(\Psi)}^*I_\Phi(\Psi)\pi_\Phi(x) &= {I_\Phi(\Psi)}^*\pi_\Psi(x)J_{\phi}(\psi) \\
        &= {\pi_\Phi(x)}{J_{\phi}(\psi)}^*J_{\phi}(\psi) \\
        &= \pi_\Phi(x)\Delta_{1 \Phi}(\Psi).
    \end{split}
\end{equation*}
Similarly,
\begin{equation*}
    \begin{split}
        {\pi_\Phi(x)}^*\Delta_{2 \Phi}(\Psi) = {\pi_\Phi(x)}^*{I_\Phi(\Psi)}^*I_\Phi(\Psi) &= {J_{\phi}(\psi)}^*{\pi_\Psi(x)}^*I_\Phi(\Psi) \\
        &= {J_{\phi}(\psi)}^*J_{\phi}(\psi){\pi_\Phi(x)}^* \\
        &= \Delta_{1 \Phi}(\Psi){\pi_\Phi(x)}^*,
    \end{split}
\end{equation*}
    for all $x \in E$.

This says that $\Delta_\Phi(\Psi) \in \pi_\Phi(E)'$ and $\|\Delta_\Phi(\Psi)\| \leq 1$.

As seen in Lemma \ref{rep}, we know that the map $\Phi_{\Delta_\Phi(\Psi)},$ given by $$\Phi_{\Delta_\Phi(\Psi)}(x) = {W_\Phi}^*\sqrt{\Delta_{2\Phi}(\Psi)}\pi_\Phi(x)\sqrt{\Delta_{1\Phi}(\Psi)}V_\phi(1_\mathcal{B} \otimes \xi),$$ is completely positive.

Moreover, by equations \ref{-1} and \ref{0}, for $x \in E,$ we have
\begin{equation*}
    \begin{split}
        \left \langle\Phi_{\sqrt{\Delta_\Phi(\Psi)}}(x), \Phi_{\sqrt{\Delta_\Phi(\Psi)}}(x) \right \rangle &= \phi_{\Delta_{1\Phi}}(\langle x,x\rangle) \\
        &= \psi(\langle x,x\rangle) \\
        &= \langle\Psi(x),\Psi(x)\rangle.
    \end{split}
\end{equation*}
Thus, $\Psi \sim \Phi_{\sqrt{\Delta_\Phi(\Psi)}}.$

Next, we show uniqueness of the map $\Delta_\Phi(\Psi).$ Suppose there is another positive linear operator $T \oplus S \in \pi_\Phi(E)'$ such that $\Psi \sim \Phi_{\sqrt{T \oplus S}}.$ 
 Then $ \Phi_{\sqrt{\Delta_\Phi(\Psi)}} \sim \Phi_{\sqrt{T \oplus S}}.$ Hence the associated maps are equal, that is, $\phi_{\Delta_{1\Phi}(\Psi)}(x) = \phi_T.$
By Theorem \ref{phiT}, we know that the map $T \mapsto \phi_T$ is injective. Thus, we get $T= \Delta_{1\Phi}(\Psi).$ Since $S$ is completely determined by $T$, by \cite[Remark 3.12]{BR} and Remark \ref{nondeg}, we obtain $S = \Delta_{2\Phi}(\Psi).$

\end{proof}

Note that the positive linear map $\Delta_\Phi(\Psi) := \Delta_{1 \Phi}(\Psi) \oplus \Delta_{2 \Phi}(\Psi) \in \pi_\Phi(E)'$ will be called as the Radon-Nikod\'ym derivative of $\Psi$ with respect to $\Phi.$

\begin{remark}
    \begin{enumerate}
        \item If $\Delta_\Phi(\Psi) := \Delta_{1 \Phi}(\Psi) \oplus \Delta_{2 \Phi}(\Psi) \in \pi_\Phi(E)'$ is the Radon-Nikod\'ym derivative of $\Psi$ with respect to $\Phi$, then $\Delta_{1 \Phi}(\Psi) \in \pi_\phi(\mathcal{A})'$ is called the Radon-Nikod\'ym derivative of $\psi$ with respect to $\phi$ (as observed in Theorem \ref{phiT}). 
        \item If $\Psi_1 \preceq \Phi$, $\Psi_2 \preceq \Phi$ and $\Psi_1 \sim \Psi_2$ then $\Delta_\Phi(\Psi_1) = \Delta_\Phi(\Psi_2)$. Indeed, $\Psi_1 \sim \Psi_2$ implies $\psi_1 = \psi_2 $ which inherently implies $J_\phi(\psi_1) = J_\phi(\psi_2)$. Since $\Delta_{1 \Phi}(\Psi)$ uniquely determines $\Delta_{2 \Phi}(\Psi),$ we have the required conclusion.
    \end{enumerate}
\end{remark}

\begin{theorem}\label{unieq}
   Let $\Phi, \Psi \in \mathcal{CP}(E, F).$ Let $(\pi_\Phi, K_\phi, K_\Phi, V_\phi, W_\Phi)$ be the Stinepring's construction associated with $\Phi.$ Let  $\Delta_{1 \Phi}(\Psi)$ and $\Delta_{2 \Phi}(\Psi)$ be defined as in Theorem \ref{main}. Suppose $\text{ker}(\Delta_{1 \Phi}(\Psi))$ and $\text{ker}(\Delta_{2 \Phi}(\Psi))$ are complemented. If $\Psi \preceq \Phi$, then there exists a unitarily equivalent Stinespring's construction associated to $\Psi.$
\end{theorem}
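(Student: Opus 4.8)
The plan is to realise the minimal Stinespring construction of $\Psi$ as the image, under two partial isometries coming from polar decompositions, of a compressed construction built entirely from the data of $\Phi$ together with the Radon--Nikod\'ym derivative $\Delta_\Phi(\Psi)=\Delta_{1\Phi}(\Psi)\oplus\Delta_{2\Phi}(\Psi)$. Write $J:=J_\phi(\psi)$ and $I:=I_\Phi(\Psi)$ for the contractions produced in Lemma~\ref{contraction} and in the proof of Theorem~\ref{main}, so that $\Delta_{1\Phi}(\Psi)=J^*J$ and $\Delta_{2\Phi}(\Psi)=I^*I$; since $\ker(J^*J)=\ker J$ in the module setting, we get $\ker\Delta_{1\Phi}(\Psi)=\ker J$ and $\ker\Delta_{2\Phi}(\Psi)=\ker I$. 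These kernels are assumed complemented, and $J,I$ have dense range because their ranges contain the total sets $\pi_\psi(a)V_\psi d$ and $\pi_\Psi(x)V_\psi d$ that span $K_\psi$ and $K_\Psi$ by \cite[Remark 3.12]{BR}. Hence the Hilbert-module polar decomposition applies, giving $J=U_1\sqrt{\Delta_{1\Phi}(\Psi)}$ and $I=U_2\sqrt{\Delta_{2\Phi}(\Psi)}$, where $U_1\colon K_\phi\to K_\psi$ and $U_2\colon K_\Phi\to K_\Psi$ are partial isometries restricting to \emph{unitaries} $(\ker\Delta_{1\Phi}(\Psi))^\perp\to K_\psi$ and $(\ker\Delta_{2\Phi}(\Psi))^\perp\to K_\Psi$. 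Securing this decomposition in the pro-$C^*$-module setting is exactly the step that forces the complementedness hypotheses, and I expect it to be the main obstacle.

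Next I would write down the candidate construction sitting inside $K_\phi$ and $K_\Phi$. Put $\hat K_\psi:=(\ker\Delta_{1\Phi}(\Psi))^\perp$ and $\hat K_\Psi:=(\ker\Delta_{2\Phi}(\Psi))^\perp$, which are complemented and hence Hilbert $\mathcal{B}$-modules. Because $\Delta_{1\Phi}(\Psi)\in\pi_\phi(\mathcal{A})'$ (Theorem~\ref{main}), $\pi_\phi(a)$ commutes with the projection onto $\hat K_\psi$, so $\hat\pi_\psi(a):=\pi_\phi(a)|_{\hat K_\psi}$ is a well-defined unital representation; and from $\Delta_{2\Phi}(\Psi)\pi_\Phi(x)=\pi_\Phi(x)\Delta_{1\Phi}(\Psi)$ together with its adjoint one checks that $\pi_\Phi(x)$ carries $\hat K_\psi$ into $\hat K_\Psi$, so $\hat\pi_\Psi(x):=\pi_\Phi(x)|_{\hat K_\psi}\colon\hat K_\psi\to\hat K_\Psi$ is a $\hat\pi_\psi$-morphism (inherited from $\pi_\Phi$ being a $\pi_\phi$-morphism). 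Setting $\hat V_\psi:=\sqrt{\Delta_{1\Phi}(\Psi)}\,V_\phi$ yields $\hat V_\psi^*\,\hat\pi_\psi(a)\,\hat V_\psi=V_\phi^*\Delta_{1\Phi}(\Psi)\pi_\phi(a)V_\phi=\phi_{\Delta_{1\Phi}(\Psi)}(a)I_D=\psi(a)I_D$ by \ref{0}, so $(\hat\pi_\psi,\hat K_\psi,\hat V_\psi)$ is a Stinespring triple for $\psi$.

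For the module part I would take $\hat W_\Psi:=U_2^*W_\Psi$ (equivalently, the operator obtained from $\sqrt{\Delta_{2\Phi}(\Psi)}\,W_\Phi$ after composing with the partial isometry of Proposition~\ref{PhiPsi}). This is the delicate point: Theorem~\ref{main} provides only $\Psi\sim\Phi_{\sqrt{\Delta_\Phi(\Psi)}}$ and not equality, so the naive choice $\sqrt{\Delta_{2\Phi}(\Psi)}\,W_\Phi$ reproduces $\Phi_{\sqrt{\Delta_\Phi(\Psi)}}$ (as in the computation of Lemma~\ref{rep}) and must be corrected by the partial isometry identifying the two inner products. With the corrected choice, $(\hat\pi_\Psi,\hat K_\psi,\hat K_\Psi,\hat V_\psi,\hat W_\Psi)$ is a genuine Stinespring construction for $\Psi$.

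Finally I would verify that $U_1,U_2$ implement a unitary equivalence onto the minimal construction $(\pi_\Psi,K_\psi,K_\Psi,V_\psi,W_\Psi)$. From $U_1\sqrt{\Delta_{1\Phi}(\Psi)}=J$ one gets $U_1\hat V_\psi=JV_\phi=V_\psi$; since $\sqrt{\Delta_{1\Phi}(\Psi)}$ commutes with $\pi_\phi(a)$, the identity $J\pi_\phi(a)=\pi_\psi(a)J$ restricts to $U_1\hat\pi_\psi(a)=\pi_\psi(a)U_1$ on $\hat K_\psi$; using $\sqrt{\Delta_{2\Phi}(\Psi)}\pi_\Phi(x)=\pi_\Phi(x)\sqrt{\Delta_{1\Phi}(\Psi)}$ together with \ref{1}, the same computation gives $U_2\hat\pi_\Psi(x)=\pi_\Psi(x)U_1$; and $U_2\hat W_\Psi=W_\Psi$ by construction. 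Together with the dense-range/minimality that makes $U_1,U_2$ unitary, these intertwining relations are precisely the assertion that the construction assembled from $\Phi$ and $\Delta_\Phi(\Psi)$ is unitarily equivalent to the Stinespring construction of $\Psi$, which completes the argument.
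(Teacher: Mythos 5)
Your overall strategy coincides with the paper's: both proofs compress the Stinespring data of $\Phi$ to the orthogonal complements $K_\phi \ominus \ker(\Delta_{1\Phi}(\Psi))$ and $K_\Phi \ominus \ker(\Delta_{2\Phi}(\Psi))$, after checking that the corresponding projections lie in the commutant; your $\hat\pi_\psi$, $\hat\pi_\Psi$, $\hat V_\psi$ are precisely the paper's $P_2\pi_\Phi(\cdot)P_1$ and $P_1\sqrt{\Delta_{1\Phi}(\Psi)}V_\phi$. Where you genuinely differ is in how the unitary equivalence is certified: the paper verifies that the compressed tuple is minimal and reproduces $\Phi_{\sqrt{\Delta_\Phi(\Psi)}}$, which is only $\sim\Psi$, and then leans on uniqueness of minimal constructions, whereas you construct the implementing operators $U_1$, $U_2$ explicitly as the isometric factors in polar decompositions of $J_\phi(\psi)$ and $I_\Phi(\Psi)$, and you correct the co-isometry to $\hat W_\Psi = U_2^*W_\Psi$ so that the compressed construction reproduces $\Psi$ on the nose. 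That correction is a real refinement (you rightly observe that the naive choice $\sqrt{\Delta_{2\Phi}(\Psi)}\,W_\Phi$ only recovers $\Phi_{\sqrt{\Delta_\Phi(\Psi)}}$), and your intertwining computations $U_1\hat V_\psi = V_\psi$, $U_1\hat\pi_\psi(a)=\pi_\psi(a)U_1$, $U_2\hat\pi_\Psi(x)=\pi_\Psi(x)U_1$, $U_2\hat W_\Psi = W_\Psi$ are all correct once $U_1$, $U_2$ exist.

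The gap is exactly the existence of $U_1$ and $U_2$. In Hilbert modules, ``$\ker J$ complemented and $J$ has dense range'' does \emph{not} imply a polar decomposition $J = U_1|J|$ with $U_1$ a partial isometry whose initial space is $(\ker J)^\perp$: the correct condition is $K_\phi = \ker J \oplus \overline{\mathrm{Ran}}\,|J|$, i.e.\ the closure of $\mathrm{Ran}\,\sqrt{\Delta_{1\Phi}(\Psi)}$ must be complemented and must fill out $(\ker \Delta_{1\Phi}(\Psi))^\perp$, which is strictly stronger than complementedness of the kernel. Concretely, with $\mathcal{B}=C[0,1]$, the module map $J: C[0,1]\to C_0((0,1])$, $Jf = tf$ (essentially the GNS picture of $\phi(\lambda)=\lambda\cdot 1_\mathcal{B}$, $\psi(\lambda)=\lambda t^2$, for which both kernels are trivially complemented) has dense range and zero kernel, yet \emph{no} unitary $\mathcal{B}$-module map from $(\ker J)^\perp = C[0,1]$ onto $C_0((0,1])$ exists at all, since any such map is multiplication by a unimodular function vanishing at $0$. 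So under the theorem's stated hypotheses your unitaries $\hat K_\psi \to K_\psi$ and $\hat K_\Psi\to K_\Psi$ may simply fail to exist. In fairness, the paper's own proof hides the identical assumption: its minimality step asserts $[P_2\sqrt{\Delta_{2\Phi}(\Psi)}K_\Phi] = K_\Phi \ominus \ker(\Delta_{2\Phi}(\Psi))$, which is precisely the claim $\overline{\mathrm{Ran}}\,\sqrt{\Delta_{2\Phi}(\Psi)} = (\ker\Delta_{2\Phi}(\Psi))^\perp$, and the remark following the theorem (on showing $\mathrm{Range}(\Delta_{1\Phi}(\Psi))$ is closed) gestures at the needed repair. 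If you strengthen the hypothesis to complementedness of the range closures (for instance, closed range), your polar decompositions exist and your argument becomes complete --- and then it is a sharper proof than the paper's, since it exhibits the unitaries explicitly and realizes $\Psi$ itself rather than only a map equivalent to it.
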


\begin{proof}
    We know that $\Delta_\Phi(\Psi) = \Delta_{1 \Phi}(\Psi) \oplus \Delta_{2 \Phi}(\Psi) \in \pi_\Phi(E)'.$ For $x \in E,$ observe that, 
    for $k_\phi \in \text{ker}(\Delta_{1 \Phi}(\Psi)),$
    $$\Delta_{2 \Phi}(\Psi)\pi_\Phi(x)(k_\phi) = \pi_\Phi(x)\Delta_{1 \Phi}(\Psi)(k_\phi) = 0,$$
    and for $k_\Phi \in \text{ker}(\Delta_{2 \Phi}(\Psi))$, we have 
    $$\Delta_{1 \Phi}(\Psi){\pi_\Phi(x)}^*(k_\Phi) = {\pi_\Phi(x)}^*\Delta_{2 \Phi}(\Psi)(k_\Phi) = 0.$$
    Thus, the pair $\big(\text{ker}(\Delta_{1 \Phi}(\Psi)), \text{ker}(\Delta_{2 \Phi}(\Psi))\big)$ is invariant under $\pi_\Phi.$

    Note that, for $x \in E,$ 
        $$\pi_\Phi(x)P_{\text{ker}(\Delta_{1 \Phi}(\Psi))} = P_{\text{ker}(\Delta_{2 \Phi}(\Psi))}\pi_\Phi(x),$$
        and 
       $${\pi_\Phi(x)}^*P_{\text{ker}(\Delta_{2 \Phi}(\Psi))} = P_{\text{ker}(\Delta_{1 \Phi}(\Psi))}{\pi_\Phi(x)}^*.$$
   Indeed, since $\text{ker}(\Delta_{1 \Phi}(\Psi))$ and $\text{ker}(\Delta_{2 \Phi}(\Psi))$ are complemented, $$K_\phi = \text{ker}(\Delta_{1 \Phi}(\Psi)) \oplus {\text{ker}(\Delta_{1 \Phi}(\Psi))}^\perp$$ and $$K_\Phi = \text{ker}(\Delta_{2 \Phi}(\Psi)) \oplus {\text{ker}(\Delta_{2 \Phi}(\Psi)})^\perp.$$ Let $k_\phi = k_{1\phi} \oplus k_{2\phi} \in K_\phi$ and $k_\Phi = k_{1\Phi} \oplus k_{2\Phi} \in K_\Phi$ be such that $\pi_\Phi(x)(k_\phi) = k_\Phi.$ Since $\pi_\Phi(E)(\text{ker}(\Delta_{1 \Phi}(\Psi))) \subseteq \text{ker}(\Delta_{2 \Phi}(\Psi)),$ we have
    $$\pi_\Phi(x)P_{\text{ker}(\Delta_{1 \Phi}(\Psi))}(k_\phi) = k_{1 \Phi} = P_{\text{ker}(\Delta_{2 \Phi}(\Psi))}(k_\Phi).$$
    
\noindent    Similarly, since ${\pi_\Phi(X)}^*(\text{ker}(\Delta_{2 \Phi}(\Psi))) \subseteq \text{ker}(\Delta_{1 \Phi}(\Psi)),$ for $j_\phi = j_{1\phi} \oplus j_{2\phi} \in K_\phi \text{ and } j_\Phi = j_{1\Phi} \oplus j_{2\Phi} \in K_\Phi$, if ${\pi_\Phi(x)}^*(j_\Phi) = j_\phi$, we have
    $${\pi_\Phi(x)}^*P_{\text{ker}(\Delta_{2 \Phi}(\Psi))}(j_\Phi) = j_{1 \phi} = P_{\text{ker}(\Delta_{1 \Phi}(\Psi))}(j_\phi).$$
   This shows that $P_{\text{ker}(\Delta_{1 \Phi}(\Psi))} \oplus P_{\text{ker}(\Delta_{2 \Phi}(\Psi))} \in \pi_\Phi(E)'.$ Similarly, we can observe that $P_{K_\phi \ominus \text{ker}(\Delta_{1 \Phi}(\Psi))} \oplus P_{K_\Phi \ominus \text{ker}(\Delta_{2 \Phi}(\Psi))} \in \pi_\Phi(E)'.$

   Let $P_1= P_{K_\phi \ominus \text{ker}(\Delta_{1 \Phi}(\Psi))} $ and $P_2 = P_{K_\Phi \ominus \text{ker}(\Delta_{2 \Phi}(\Psi))}.$ Then the Stinespring's construction associated to $\Psi$ is unitarily equivalent to $$\left(P_2\pi_\Phi(\cdot)P_1, K_\phi \ominus \text{ker}(\Delta_{1 \Phi}(\Psi)), K_\Phi \ominus \text{ker}(\Delta_{2 \Phi}(\Psi)), P_1\sqrt{\Delta_{1\Phi}(\Psi)}V_\Phi, P_2 W_\Phi \right).$$

   Indeed, for each $x \in E$, $$P_2\pi_\Phi(x)P_1 \in \mathcal{L_B}\left(K_\phi \ominus \text{ker}(\Delta_{1 \Phi}(\Psi)),K_\Phi \ominus \text{ker}(\Delta_{2 \Phi}(\Psi))\right).$$ In fact, 
   \begin{equation*}
       \begin{split}
           \langle P_2\pi_\Phi(x)P_1 , P_2\pi_\Phi(y)P_1 \rangle 
           &= P_1 \pi_\Phi(x)^*P_2\pi_\Phi(y)P_1 \\
           &=  P_1 P_1\pi_\Phi(x)^*\pi_\Phi(y)P_1 \\
           &= P_1\langle\pi_\phi(x),\pi_\phi(y)\rangle P_1,
       \end{split}
   \end{equation*}
for all $x,y \in E.$  Hence $P_2\pi_\Phi(.)P_1$ is a $P_2\pi_\phi(.)P_1-$map.

Note that
$$(P_2 W_\Phi)(P_2 W_\Phi)^* = P_2 W_\Phi {W_\Phi}^* P_2 = P_2,$$
hence $P_2 W_\Phi \in \mathcal{L_B}(F,K_\Phi \ominus \text{ker}(\Delta_{2 \Phi}(\Psi)))$ is a co-isometry.

Observe that 
\begin{equation*}
    \begin{split}
        \left[ P_2\pi_\Phi(\cdot)P_1 \left( P_1\sqrt{\Delta_{1\Phi}(\Psi)}V_\phi \right)D \right] 
        &=  \left[ P_2\pi_\Phi(\cdot)\sqrt{\Delta_{1\Phi}(\Psi)}V_\phi D \right] \\
        &= \left[ P_2\sqrt{\Delta_{2\Phi}(\Psi)}\pi_\Phi(\cdot)V_\phi D \right] \\
        &= \left[ P_2\sqrt{\Delta_{2\Phi}(\Psi)}K_\Phi\right] \\
        &= K_\Phi \ominus \text{ker}(\Delta_{2 \Phi}(\Psi)).
    \end{split}
\end{equation*}
This shows minimality of the construction.


Note that $  \Psi \sim   \Phi_{\sqrt{\Delta_{\Phi}(\Psi)}}$, by Theorem \ref{main}. Finally, we observe that, for all $x \in E,$
\begin{equation*}
    \begin{split}
     \Phi_{\sqrt{\Delta_{\Phi}(\Psi)}}(x) &= {W_\Phi}^*{\Delta_{2\Phi}(\Psi)}^\frac{1}{4}\pi_\Phi(x){\Delta_{1\Phi}(\Psi)}^\frac{1}{4}V_\phi(1_\mathcal{B} \otimes \xi) \\
      &={W_\Phi}^*\pi_\Phi(x)\sqrt{\Delta_{1\Phi}(\Psi)}V_\phi(1_\mathcal{B} \otimes \xi) \\
      &={W_\Phi}^*\pi_\Phi(x)P_1P_1\sqrt{\Delta_{1\Phi}(\Psi)}V_\phi(1_\mathcal{B} \otimes \xi) \\
      &={W_\Phi}^*P_2\pi_\Phi(x)P_1\sqrt{\Delta_{1\Phi}(\Psi)}V_\phi(1_\mathcal{B} \otimes \xi) \\
      &={(P_2W_\Phi)}^*\pi_\Phi(x)\left(P_1\sqrt{\Delta_{1\Phi}(\Psi)}V_\phi (1_\mathcal{B} \otimes \xi)\right).
    \end{split}
\end{equation*}

\end{proof}

\begin{remark}
 We observe the following from Theorem \ref{unieq}:
 \begin{enumerate}
    \item  $P_1 = P_{\overline{\text{Ran}(\Delta_{1\Phi}(\Psi)^*)}}$ and $P_2 = P_{\overline{\text{Ran}(\Delta_{2\Phi}(\Psi)^*)}}$ (see \cite[Remark 3.2.5]{MJbook} for further details).

    \item One may naturally ask: ``Is it possible to discard the condition that $\text{ker}(\Delta_{1 \Phi}(\Psi))$ and $\text{ker}(\Delta_{2 \Phi}(\Psi))$ are complemented?" As of now, we do not have an affirmative answer. One approach to show that $\text{ker}(\Delta_{1 \Phi}(\Psi))$ is complemented is to show that $\text{Range}(\Delta_{1 \Phi}(\Psi))$ is closed. 
\end{enumerate}
\end{remark}

Next, we want to define a one to one correspondence between all the maps related to the completely positive map $\Psi$ and the Radon-Nikod\'ym derivative of $\Psi$ with respect to $\Phi$.

For $\Phi \in \mathcal{CP}(E, F),$ we define $\hat{\Phi} := \{\Psi \in \mathcal{CP}(E, F): \Phi \sim \Psi\}$. Let $\Psi_1, \Psi_2 \in \mathcal{CP}(E, F),$ we write $\hat{\Psi}_1 \leq \hat{\Psi}_2$ if ${\Psi}_1 \preceq {\Psi}_2.$ Next, we define
$$[0, \hat{\Phi}] := \{\hat{\Psi}: \Psi \in \mathcal{CP}(E, F), \hat{\Psi} \leq \hat{\Phi}\},$$
and 
$$[0,I]_\Phi := \{T \oplus S \in \pi_\Phi(E)': \| T \oplus S \| \leq 1\}.$$

\begin{theorem}\label{iso}
    Let $\Phi \in \mathcal{CP}(E, F).$ The map $\hat{\Psi} \mapsto \Delta_\Phi(\Psi)$ is an order-preserving isomorphism from $[0, \hat{\Phi}]$ to $[0,I]_\Phi.$
\end{theorem}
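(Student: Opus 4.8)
The plan is to view the map as assembled from the correspondences already established and to isolate the single new computation it genuinely requires. Note first that $\hat\Psi\mapsto\Delta_\Phi(\Psi)$ does land in $[0,I]_\Phi$, since $\Delta_\Phi(\Psi)$ is a positive contraction in $\pi_\Phi(E)'$ by Theorem \ref{main}. Next I would record the bookkeeping that underlies everything: because $E$ is full and every $\Psi\in\mathcal{CP}(E,F)$ is a $\psi$-map, one has $\Psi\sim\Psi'$ if and only if $\psi(\langle x,x\rangle)=\psi'(\langle x,x\rangle)$ for all $x\in E$, which by polarization and fullness is equivalent to $\psi=\psi'$. Hence $\hat\Psi\mapsto\psi$ is well defined and injective on classes, each $\hat\Psi\in[0,\hat\Phi]$ has underlying map $\psi\in[0,\phi]$, and since $\hat\Psi_1\le\hat\Psi_2$ means by definition $\psi_1\le\psi_2$, the assignment $\hat\Psi\mapsto\psi$ is an order-preserving injection of $[0,\hat\Phi]$ into $[0,\phi]$. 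Well-definedness of $\hat\Psi\mapsto\Delta_\Phi(\Psi)$ on classes is then precisely item (2) of the Remark after Theorem \ref{main}, and injectivity follows because $\Delta_\Phi(\Psi_1)=\Delta_\Phi(\Psi_2)$ forces $\Delta_{1\Phi}(\Psi_1)=\Delta_{1\Phi}(\Psi_2)$, hence $\psi_1=\psi_2$ by the injectivity established in Theorem \ref{phiT}, hence $\hat\Psi_1=\hat\Psi_2$.

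For surjectivity I would argue directly rather than through $[0,\phi]$. Given a positive contraction $T\oplus S\in\pi_\Phi(E)'$ (so $0\le T\le I$ and $0\le S\le I$ by block-diagonality), set $\Psi:=\Phi_{\sqrt{T\oplus S}}$, which makes sense since $\pi_\Phi(E)'$ is a pro-$C^*$-algebra closed under functional calculus. By Lemma \ref{rep} this $\Psi$ is completely positive with associated map $\phi_T$, and since $0\le T\le I$ with $T\in\pi_\phi(\mathcal{A})'$ by Remark \ref{nondeg}, Theorem \ref{phiT} gives $\phi_T\le\phi$; thus $\Psi\preceq\Phi$, i.e. $\hat\Psi\in[0,\hat\Phi]$. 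Because $\Psi\sim\Phi_{\sqrt{T\oplus S}}$ holds trivially and $T\oplus S$ is a positive element of $\pi_\Phi(E)'$, the uniqueness clause of Theorem \ref{main} forces $\Delta_\Phi(\Psi)=T\oplus S$, so the map is onto $[0,I]_\Phi$.

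It remains to check that $\hat\Psi\mapsto\Delta_\Phi(\Psi)$ preserves order in both directions, and here the only implication that is not formal is that $\hat\Psi_1\le\hat\Psi_2$ forces $\Delta_{2\Phi}(\Psi_1)\le\Delta_{2\Phi}(\Psi_2)$ on the second summand; the first-summand inequality $\Delta_{1\Phi}(\Psi_1)\le\Delta_{1\Phi}(\Psi_2)$ is immediate from Theorem \ref{phiT}, and a block-diagonal positive operator on $K_\phi\oplus K_\Phi$ dominates another exactly when it does so on each summand, which also yields the converse implication $\Delta_\Phi(\Psi_1)\le\Delta_\Phi(\Psi_2)\Rightarrow\psi_1\le\psi_2$ by restriction to the first summand and Theorem \ref{phiT}. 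To treat the second summand I would evaluate the quadratic form of $\Delta_{2\Phi}(\Psi)=I_\Phi(\Psi)^*I_\Phi(\Psi)$ on the dense submodule $[\pi_\Phi(E)V_\phi D]=K_\Phi$: for $k=\sum_i\pi_\Phi(x_i)V_\phi d_i$ the defining relation of $I_\Phi(\Psi)$ gives $\langle k,\Delta_{2\Phi}(\Psi)k\rangle=\sum_{i,j}\langle d_i,\psi(\langle x_i,x_j\rangle)d_j\rangle$, reproducing the estimate already carried out in the proof of Theorem \ref{main}. Assuming $\psi_1\le\psi_2$, complete positivity of $\psi_2-\psi_1$ applied to the positive Gram matrix $[\langle x_i,x_j\rangle]\in M_n(\mathcal{A})$ (via Remark \ref{positivityequi}) makes the difference of these forms nonnegative on a dense set, whence $\Delta_{2\Phi}(\Psi_1)\le\Delta_{2\Phi}(\Psi_2)$. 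I expect this second-summand computation to be the main obstacle, as it is the single step that does not reduce mechanically to Theorem \ref{phiT} and requires re-deriving the Gram-matrix form of the quadratic form of $\Delta_{2\Phi}$.
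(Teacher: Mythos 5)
Your proof is correct, and its overall skeleton matches the paper's: well-definedness on classes from the remark following Theorem \ref{main}, injectivity via Theorem \ref{main}/Theorem \ref{phiT}, surjectivity by setting $\Psi:=\Phi_{\sqrt{T\oplus S}}$, invoking Lemma \ref{rep} and Theorem \ref{phiT} to get $\Psi\preceq\Phi$, and then identifying $\Delta_\Phi(\Psi)=T\oplus S$ (you use the uniqueness clause of Theorem \ref{main}; the paper uses injectivity of $T\mapsto\phi_T$ plus Remark \ref{nondeg}(2) — these amount to the same thing), and order-reflection by restricting to the first block and applying the order isomorphism of Theorem \ref{phiT}. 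Where you genuinely diverge — and improve on the paper — is the forward order-preservation on the second block. The paper passes from $\Delta_{1\Phi}(\Psi_1)\le\Delta_{1\Phi}(\Psi_2)$ to $\Delta_{\Phi}(\Psi_1)\le\Delta_{\Phi}(\Psi_2)$ by citing Remark \ref{nondeg}(2), i.e.\ that the second component is \emph{uniquely determined} by the first; unique determination does not by itself transfer an order relation, so this step is a gap in the paper's argument. Your treatment closes it: you evaluate the quadratic form of $\Delta_{2\Phi}(\Psi)=I_\Phi(\Psi)^*I_\Phi(\Psi)$ on the dense submodule $[\pi_\Phi(E)V_\phi D]=K_\Phi$, obtaining $\sum_{i,j}\langle d_i,\psi(\langle x_i,x_j\rangle)d_j\rangle$, and then apply complete positivity of $\psi_2-\psi_1$ to Gram matrices (via Remark \ref{positivityequi}) to get $\Delta_{2\Phi}(\Psi_1)\le\Delta_{2\Phi}(\Psi_2)$ — exactly the computation pattern already present in the proofs of Lemma \ref{contraction} and Theorem \ref{main}, so nothing new is assumed. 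One cosmetic point: in your injectivity argument you cite the injectivity of $T\mapsto\phi_T$, but what you actually use is only that $\Delta_{1\Phi}(\Psi_i)$ determines $\psi_i=\phi_{\Delta_{1\Phi}(\Psi_i)}$ as a function of the operator; injectivity is not needed in that direction (the paper's cleaner route is $\Psi_1\sim\Phi_{\sqrt{\Delta_\Phi(\Psi_1)}}=\Phi_{\sqrt{\Delta_\Phi(\Psi_2)}}\sim\Psi_2$). This miscitation is harmless and does not affect correctness.
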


\begin{proof}
The map $\hat{\Psi} \mapsto \Delta_\Phi(\Psi)$ is well defined as seen in Theorem \ref{main}. Let $\Psi_1, \Psi_2 \in \mathcal{CP}(E, F)$ such that $\Psi_1 \preceq \Phi, \Psi_2 \preceq \Phi$ and $\Delta_\Phi(\Psi_1) = \Delta_\Phi(\Psi_2)$. Then $\Psi_1 \sim \Phi_{\sqrt{\Delta_\Phi(\Psi_1)}} = \Phi_{\sqrt{\Delta_\Phi(\Psi_2)}} \sim \Psi_2.$ So, $\hat{\Psi}_1 = \hat{\Psi}_2$, which implies that the map is injective. 

Next we show that the map is surjective.
Let $T \oplus S \in [0,I]_\Phi.$ Then by Lemma \ref{rep}, $\Phi_{\sqrt{T \oplus S}} \in \mathcal{CP}(E,F).$ Since $I - T$ is positive, it follows from the proof of Theorem \ref{phiT} that $\phi_{I-T} = \phi - \phi_T$ is completely positive. Therefore, we have $\Phi_{\sqrt{T \oplus S}} \preceq \Phi$. Moreover, by Theorem \ref{main}, there exists an operator $\Delta_\Phi(\Psi) \in \pi_\Phi(E)'$ such that $\Phi_{\sqrt{T \oplus S}} \sim \Phi_{\sqrt{\Delta_\Phi(\Psi)}}$. Since $\phi_T = \phi_{\Delta_{1\Phi}(\Phi_{\sqrt{T\oplus S}})},$ injectivity of the map $T \mapsto \phi_T,$ implies $\Delta_{1\Phi}(\Phi_{\sqrt{T\oplus S}}) = T$. Thus, by Remark \ref{nondeg} $(2)$, we have $\Delta_{\Phi}(\Phi_{\sqrt{T\oplus S}}) = T \oplus S.$
    
Let $\hat{\Psi}_1, \hat{\Psi}_2 \in [0, \hat{\Phi}]$ such that $\hat{\Psi}_1 \leq \hat{\Psi}_2$ then $\Psi_1 \preceq \Psi_2 \preceq \Phi.$ Similar calculations as seen in Lemma \ref{contraction}, imply ${J_\phi(\psi_1)}^*J_\phi(\psi_1) \leq {J_\phi(\psi_2)}^*J_\phi(\psi_2)$, that is $\Delta_{1\Phi}(\Psi_1) \leq \Delta_{1\Phi}(\Psi_2).$ By Remark \ref{nondeg} $(2)$, we get  $\Delta_{\Phi}(\Psi_1) \leq \Delta_{\Phi}(\Psi_2).$
Conversely, if, for $T_1 \oplus S_1, T_2 \oplus S_2 \in \pi_\Phi(E)'$, $0 \leq T_1\oplus S_1 \leq T_2 \oplus S_2 \leq I$, then we know that, $0 \leq T_1 \leq T_2 \leq I$, where $T_1, T_2 \in \pi_\phi(\mathcal{A})'.$ This implies that $\phi_{T_1} \leq \phi_{T_2},$ and thus we get $\Phi_{\sqrt{T_1\oplus S_2}} \preceq \Phi_{\sqrt{T_2\oplus S_2}}.$

\end{proof}

\begin{definition}
    Let $\Phi \in \mathcal{CP}(E,F).$ Then we say $\Phi$ is pure, if for any $\Psi \in \mathcal{CP}(E,F)$ with $\hat{\Psi} \leq \hat{\Phi},$ there is a $\lambda > 0$ such that $\Psi \sim \lambda \Phi.$
\end{definition}

\begin{proposition}
    Let $\Phi \in \mathcal{CP}(E,F)$ be a non-zero map. Then $\Phi$ is pure if and only if $\pi_\Phi(E)'= \mathbb{C}I$. 
\end{proposition}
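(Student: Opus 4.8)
The plan is to reduce everything to the order isomorphism of Theorem~\ref{iso} between $[0,\hat{\Phi}]$ and the positive contractions $[0,I]_\Phi$ in $\pi_\Phi(E)'$, and then to identify exactly which classes $\hat{\Psi}$ correspond to the \emph{scalar} positive contractions. The decisive computation is the behaviour of the construction under scaling: for $\lambda>0$ the map $\lambda\Phi$ is a $\lambda^2\phi$-map, and by the essential uniqueness of the minimal data one may take its Stinespring construction to be $(\pi_\Phi, K_\phi, K_\Phi, \lambda V_\phi, W_\Phi)$. Feeding these data into the definition of $J_\phi$ in Lemma~\ref{contraction} and of $I_\Phi$ in Theorem~\ref{main} gives $J_\phi(\lambda^2\phi)=\lambda I_{K_\phi}$ and $I_\Phi(\lambda\Phi)=\lambda I_{K_\Phi}$, whence $\Delta_\Phi(\lambda\Phi)=\lambda^2 I_{K_\phi}\oplus\lambda^2 I_{K_\Phi}=\lambda^2 I$. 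Here $\lambda\Phi\preceq\Phi$ precisely when $0<\lambda\le 1$, and by equation~\eqref{0} in Theorem~\ref{main} one checks that $\Delta_\Phi(\Psi)=0$ forces $\psi=\phi_0=0$ and hence $\Psi=0$; so, among nonzero maps, the classes $\widehat{\lambda\Phi}$ with $0<\lambda\le 1$ correspond under the isomorphism exactly to the scalars $\lambda^2 I\in[0,I]_\Phi$. (Since purity requires $\lambda>0$, the substantive statements concern nonzero $\Psi$.)

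For the forward implication, assume $\Phi$ is pure. Given any nonzero $\Psi$ with $\hat{\Psi}\le\hat{\Phi}$, purity produces $\lambda>0$ with $\Psi\sim\lambda\Phi$, so $\Delta_\Phi(\Psi)=\Delta_\Phi(\lambda\Phi)=\lambda^2 I$ is a scalar. Since $\hat{\Psi}\mapsto\Delta_\Phi(\Psi)$ is onto $[0,I]_\Phi$ by Theorem~\ref{iso}, every positive contraction in $\pi_\Phi(E)'$ must be a scalar multiple of $I$. I would then upgrade this to $\pi_\Phi(E)'=\mathbb{C}I$, using that $\pi_\Phi(E)'$ is a pro-$C^*$-algebra by Remark~\ref{nondeg}: for self-adjoint $A$ in the commutant, $(I+A^*A)^{-1}$ is a positive contraction lying in the commutant, hence equals $\mu I$ for some $\mu\in(0,1]$, which forces $A^*A=(\mu^{-1}-1)I$; the $C^*$-condition then shows $A$ is bounded with $A^2$ scalar, and the spectral projection of $A$ onto $[0,\infty)$ (a projection in the commutant, hence a scalar projection equal to $0$ or $I$) forces $A$ itself to be scalar. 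Splitting a general element into self-adjoint parts completes the argument.

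For the converse, assume $\pi_\Phi(E)'=\mathbb{C}I$, so $[0,I]_\Phi=\{\lambda I:0\le\lambda\le 1\}$. If $\hat{\Psi}\le\hat{\Phi}$ with $\Psi\ne 0$, then $\Delta_\Phi(\Psi)=\lambda I$ for some $\lambda$; the value $\lambda=0$ is excluded since it would give $\psi=0$ and hence $\Psi=0$, so $\lambda>0$. By the scaling computation $\lambda I=(\sqrt{\lambda})^2 I=\Delta_\Phi(\sqrt{\lambda}\,\Phi)$, and injectivity of the isomorphism in Theorem~\ref{iso} yields $\Psi\sim\sqrt{\lambda}\,\Phi$ with $\sqrt{\lambda}>0$, so $\Phi$ is pure.

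The main obstacle is the last algebraic step of the forward direction: passing from ``every positive contraction is scalar'' to ``$\pi_\Phi(E)'=\mathbb{C}I$''. In the $C^*$-setting this is routine because self-adjoint elements are bounded and affinely rescale into the interval $[0,I]$, but in a pro-$C^*$-algebra the self-adjoint elements of $\pi_\Phi(E)'$ may be unbounded across the defining $C^*$-seminorms, so the naive rescaling fails and must be replaced by the bounded-transform and spectral-projection argument above. The point requiring care is to confirm that the relevant inverse $(I+A^*A)^{-1}$ and the spectral projections remain inside the closed commutant $\pi_\Phi(E)'$, which is where Remark~\ref{nondeg}, guaranteeing that $\pi_\Phi(E)'$ is a (closed) pro-$C^*$-algebra, does the work.
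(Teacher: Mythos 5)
Your proof is correct, and its skeleton is the same as the paper's: both directions run through the order isomorphism $\hat{\Psi}\mapsto\Delta_\Phi(\Psi)$ of Theorem \ref{iso} together with a scaling identity, and conclude by uniqueness of the Radon--Nikod\'ym derivative. (The paper obtains the scaling identity as $\lambda\Phi=\Phi_{\lambda I}$ directly from the formula in Lemma \ref{rep}, whereas you obtain it as $\Delta_\Phi(\lambda\Phi)=\lambda^{2}I$ by rescaling the Stinespring data and reading off $J_\phi$ and $I_\Phi$; these are the same fact in two guises.) The genuine difference is at the end of the forward direction. After showing that every positive contraction in $\pi_\Phi(E)'$ has the form $\lambda^{2}I$, the paper simply asserts $\pi_\Phi(E)'=\mathbb{C}I$. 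That passage is routine for $C^*$-algebras, where a self-adjoint element rescales affinely into $[0,I]$, but it is not automatic here: as you correctly point out, self-adjoint elements of the pro-$C^*$-algebra $\pi_\Phi(E)'$ need not be bounded across the defining seminorms. Your bounded-transform step --- $(I+A^{*}A)^{-1}$ is a positive contraction in the commutant, hence equals $\mu I$, hence $A^{2}=cI$ with $c=\mu^{-1}-1$; if $c=0$ then $A=0$ by the $C^*$-seminorm identity, while if $c>0$ the polynomial projection $(A+\sqrt{c}\,I)/(2\sqrt{c})$ lies in the commutant and must be the scalar projection $0$ or $I$, forcing $A=\pm\sqrt{c}\,I$ --- supplies exactly the justification the paper leaves implicit. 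One small repair to your own sketch: inverse-closedness of $\pi_\Phi(E)'$ should be checked directly from the intertwining relations (from $\Pi(x)T_1=T_2\Pi(x)$ one gets $\Pi(x)T_1^{-1}=T_2^{-1}\Pi(x)$, and similarly for the adjoint relation), since closedness as a pro-$C^*$-subalgebra, which is what Remark \ref{nondeg} gives, does not by itself yield spectral permanence. You also settle the $\Psi=0$, $\lambda=0$ edge cases that the paper passes over silently. Net effect: the same mechanism as the paper, made airtight at the one point where the paper's argument is thinnest.
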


\begin{proof}
    First, let $0 \neq \Phi \in \mathcal{CP}(E,F)$ be pure. Let $T \oplus S \in \pi_\Phi(E)'$ with $0 \leq T \oplus S \leq I.$ Then by Theorem \ref{iso}, $\Phi_{\sqrt{T \oplus S}} \preceq \Phi.$ 
    Since, $\Phi$ is pure, there exists a $\lambda > 0$ such that $\Phi_{\sqrt{T \oplus S}} \sim \lambda \Phi = \Phi_{\lambda I}.$ Indeed by Stinespring's construction and Lemma \ref{rep}, for $x \in E$, we have
    \begin{equation*}
        \lambda \Phi(x) = \lambda {W_\Phi}^*\pi_\Phi(x)V_\phi (1_\mathcal{B} \otimes \xi)
        =  {W_\Phi}^*\sqrt{\lambda I}\pi_\Phi(x)\sqrt{\lambda I}V_\phi (1_\mathcal{B} \otimes \xi)
        = \Phi_{\lambda I}.
    \end{equation*}
    Hence, $T \oplus S = \lambda^2 I.$
Therefore, the commutant $\pi_\Phi(E)' = \mathbb{C}I.$ 

Conversely, let $\Psi \in \mathcal{CP}(E, F)$ be such that $\hat{\Psi} \leq \hat{\Phi}$. By Theorem \ref{iso} and using the fact that $\pi_\Phi(E)' = \mathbb{C}I$, there exists $\lambda I \in \pi_\Phi(E)'$ with $\lambda > 0$ such that $\Psi \sim \Phi_{\sqrt{\lambda}I} = \sqrt{\lambda}\Phi$. Thus, $\Phi$ is pure.
\end{proof}

\begin{center}
		\textbf{Statements and Declarations}
	\end{center}
	
	\textbf{Funding}\\
	The first author's research is supported by the Prime Minister’s Research Fellowship (PMRF ID: 2001291) provided by the Ministry of Education (MoE), Govt. of India.\\

	\textbf{Data Availability}\\
	Data sharing not applicable to this article as no datasets were generated or
	analysed during the current study\\
	
	\textbf{Conflict of interest}\\
	On behalf of all authors, the corresponding author states that there is no conflict of interest.



	\bibliographystyle{amsplain}

\end{document}